\documentclass[11pt,reqno]{amsart}

 \usepackage{amsfonts,amsmath,amscd,amssymb,amsbsy,amsthm,amstext,amsopn,amsxtra}
 \usepackage{color,fullpage,mathrsfs}
 \usepackage{subfigure}
 \usepackage{verbatim} 
 \usepackage{stmaryrd}
 \usepackage{extarrows}
 \usepackage[all]{xy}
 \usepackage{bm}   
 \usepackage{hyperref}
 \usepackage{enumitem}
 
\usepackage{cancel}
\usepackage{color}
\usepackage{tikz}
 \usepackage{tikz-cd}

 \numberwithin{equation}{section}
 \setcounter{tocdepth}{1}

 \hypersetup{colorlinks,linkcolor=[rgb]{0,0,1},citecolor=[rgb]{1,0,0}}


\newtheorem{theorem}{Theorem}[section]
\newtheorem{lemma}[theorem]{Lemma}
\newtheorem{proposition}[theorem]{Proposition}
\newtheorem{corollary}[theorem]{Corollary}

\theoremstyle{definition}

\newtheorem{example}[theorem]{Example}

\newtheorem{remark}[theorem]{Remark}


\newcommand{\one}{\ensuremath{(\mathrm{i})}}
\newcommand{\two}{\ensuremath{(\mathrm{ii})}}

\newcommand{\aLus}{\ensuremath{(\mathrm{a})}}
\newcommand{\bLus}{\ensuremath{(\mathrm{b})}}

\newcommand{\kk}{\ensuremath{\Bbbk}} 
\newcommand{\NN}{\ensuremath{\mathbb{N}}}

\newcommand{\git}{\ensuremath{\operatorname{\!/\!\!/\!}}}
\newcommand{\GL}{\operatorname{GL}}
\newcommand{\head}{\operatorname{h}}
\newcommand{\Hilb}{\operatorname{Hilb}}

\newcommand{\id}{\operatorname{id}}

\newcommand{\Mat}{\operatorname{Mat}}

\newcommand{\Rep}{\operatorname{Rep}}

\newcommand{\SL}{\operatorname{SL}}

\newcommand{\Spec}{\operatorname{Spec}}
\newcommand{\Sym}{\operatorname{Sym}}

\newcommand{\tail}{\operatorname{t}}
\newcommand{\tr}{\operatorname{tr}}

\newcommand{\xpij}[2]{\operatorname{x}_{#1,#2}}
\newcommand{\xaij}[2]{\operatorname{x}_{#1,#2}}

\newcommand{\pij}{\operatorname{x}_{p,ij}}

\newcommand{\aij}{\operatorname{x}_{a,ij}}

\newcommand{\plij}{\operatorname{x}_{p_l,ij}}

\title{The Le Bruyn--Procesi theorem following Lusztig}

\author{Alastair Craw} 
\author{Ryo Yamagishi} 

\address{Department of Mathematical Sciences, 
University of Bath, 
Claverton Down, 
Bath BA2 7AY, 
UK.}
\email{a.craw@bath.ac.uk}

\address{Department of Mathematics, Faculty of Science, Kyushu University, 744, Motooka, Nishi-ku, Fukuoka, Japan 819-0395.}
 \email{yamagishi@kyushu-u.ac.jp}

 \begin{document}
 \begin{abstract}
 For any quiver $Q$ and dimension vector $v$, Le Bruyn--Procesi proved that the invariant ring for the action of the change of basis group on the space of representations $\Rep(Q,v)$ is generated by the traces of matrix products associated to cycles in the quiver. Lusztig generalised this to allow for vertices where the group acts trivially. Here we provide a simple new proof of Lusztig's theorem and determine the relations between his algebra generators for any quiver with relations. 
\end{abstract} 

 \maketitle

\tableofcontents

\section{Introduction}

  Let $Q$ be a finite, connected quiver with vertex set $Q_0$ and arrow set $Q_1$, let $v=(v_i)\in \mathbb{N}^{Q_0}$ be a dimension vector, and let $\kk$ be an algebraically closed field of characteristic zero.  The representation space for the pair $(Q,v)$ is the $\kk$-vector space
\begin{equation}
    \label{eqn:Repintro}
\Rep(Q,v)=\bigoplus_{a\in Q_1} \Mat\big(v_{\head(a)}\times v_{\tail(a)}\big)
\end{equation}
of tuples of matrices with entries in $\kk$,  where $\head(a)$ and $\tail(a)$ denote the vertices at the head and tail of $a$ respectively.  The group $\GL(v):=\prod_{i\in Q_0} \GL(v_i,\kk)$
 acts by conjugation on $\Rep(Q,v)$, so it acts dually on the coordinate ring $\kk[\Rep(Q,v)]$. Each nontrivial cycle $\gamma$ in $Q$ defines a trace function $\tr_\gamma$ that associates to each representation of $Q$, the trace of the product of matrices determined by the arrows in the cycle $\gamma$. A well-known theorem of Le Bruyn--Procesi~\cite{LBP90} established that the trace functions provide $\kk$-algebra generators for the $\GL(v)$-invariant subalgebra $\kk[\Rep(Q,v)]^{\GL(v)}$.

 In practise, one typically studies representations of a quiver satisfying relations, or equivalently, modules over an algebra $A$ obtained as a quotient of the path algebra of $Q$. In this context, by restricting functions to the representation subscheme $\Rep(A,v)\subseteq \Rep(Q,v)$ cut out by the ideal of relations, we obtain a surjective $\kk$-algebra homomorphism
 \begin{equation}
    \label{eqn:tauintro}
 \tau\colon \kk[\Rep(Q,v)]^{\GL(v)}\longrightarrow \kk[\Rep(A,v)]^{\GL(v)},
 \end{equation}
 so the trace functions also generate the $\kk$-algebra $\kk[\Rep(A,v)]^{\GL(v)}$. Understanding this ring is of interest because it defines the affine GIT quotient $\Rep(A,v)\git \GL(v)$. 
 
 We extend the result of Le Bruyn--Procesi by calculating the kernel of the map from \eqref{eqn:tauintro} in terms of the quiver presentation $\beta\colon \kk Q\to A$, thereby describing the relations between the algebra generators of $\kk[\Rep(A,v)]^{\GL(v)}$ in terms of trace functions. To state the result, write $e_k\in A$ for the vertex idempotent obtained as the class of the trivial path at vertex $k\in Q_0$.

 \begin{theorem}
 \label{thm:LBPintro} 
 The algebra $\kk[\Rep(A,v)]^{\GL(v)}$ is generated by the trace functions $\tr_{\beta(\gamma)}$ associated to cycles $\gamma$ in $Q$, and moreover, the kernel of $\tau$ from \eqref{eqn:tauintro} is generated by the functions $\tr_{\gamma}$, where $\gamma\in e_k\ker(\beta) e_k$ for some $k\in Q_0$.  
 \end{theorem}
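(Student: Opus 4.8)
The plan is to first dispose of the generation statement, then identify $\ker(\tau)$ with the invariant part of the defining ideal of $\Rep(A,v)$ and analyse it with the Reynolds operator and a first fundamental theorem for covariants. Write $R=\kk[\Rep(Q,v)]$ and $G=\GL(v)$, and for $w\in e_j(\kk Q)e_i$ let $X_w\in\Mat(v_j\times v_i)\otimes R$ be the matrix of coordinate functions computing the product of arrow-matrices along $w$, extended $\kk$-linearly in $w$; thus $\tr_w=\tr(X_w)$ for a cycle $w$, and $X_{ww'}=X_wX_{w'}$ whenever the paths compose. Each $X_w$ is $G$-invariant for the diagonal action, and $\Rep(A,v)$ is cut out by the $G$-stable ideal $I\subseteq R$ generated by the entries of the $X_r$ with $r\in\ker(\beta)$. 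Generation is then immediate: since $\tau$ is a surjective $\kk$-algebra map whose domain $R^G$ is generated as a $\kk$-algebra by the $\tr_\gamma$ (Le Bruyn--Procesi), its target is generated by the $\tau(\tr_\gamma)=\tr_{\beta(\gamma)}$.

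For the kernel I would use that $G$ is reductive and $\operatorname{char}\kk=0$, so taking $G$-invariants is exact; applied to $0\to I\to R\to\kk[\Rep(A,v)]\to0$ this identifies $\ker(\tau)$ with $I\cap R^G$. The inclusion $\langle\tr_\gamma:\gamma\in e_k\ker(\beta)e_k\rangle\subseteq\ker(\tau)$ is clear, since such a $\gamma$ has $\beta(\gamma)=0$ and hence $\tau(\tr_\gamma)=\tr_{\beta(\gamma)}=0$. For the reverse inclusion I would first note that every element of $I$ is a sum of terms $\tr(HX_r)$ with $r\in e_j\ker(\beta)e_i$ and $H\in\Mat(v_i\times v_j)\otimes R$ arbitrary, because letting $H$ vary recovers all $R$-linear combinations of the entries of $X_r$. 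Given $f\in I\cap R^G$, write $f=\sum_s\tr(H_sX_{r_s})$. The contraction $H\mapsto\tr(HX_r)$ is a $G$-equivariant linear map $\Mat(v_i\times v_j)\otimes R\to R$ (as $X_r$ is $G$-invariant), and the Reynolds operator $\mathcal{R}$ commutes with $G$-equivariant maps; applying $\mathcal{R}$ and using $\mathcal{R}(f)=f$ yields $f=\sum_s\tr(\mathcal{R}(H_s)X_{r_s})$, where each $\mathcal{R}(H_s)\in(\Mat(v_i\times v_j)\otimes R)^G$ is now an \emph{invariant} matrix function.

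The final ingredient is a first fundamental theorem for covariants: $(\Mat(v_i\times v_j)\otimes R)^G$ is spanned as an $R^G$-module by the path-matrices $X_q$ with $q$ a path from $j$ to $i$ in $Q$. I would deduce this from Lusztig's theorem applied to the quiver obtained from $Q$ by adjoining one arrow $c\colon i\to j$, extracting the part linear in the new coordinates; equivalently this is the invariant theory of $Q$ with one extra frozen vertex, which is precisely Lusztig's setting. Granting it, write $\mathcal{R}(H_s)=\sum_t\phi_{st}X_{q_{st}}$ with $\phi_{st}\in R^G$; then $\tr(\mathcal{R}(H_s)X_{r_s})=\sum_t\phi_{st}\tr(X_{q_{st}r_s})$. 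As $\ker(\beta)$ is a two-sided ideal, $q_{st}r_s\in e_i\ker(\beta)e_i$ is a cycle in the kernel, so each $\tr(X_{q_{st}r_s})=\tr_{q_{st}r_s}$ is a distinguished generator and $f=\sum_{s,t}\phi_{st}\tr_{q_{st}r_s}$ lies in the claimed ideal.

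I expect the covariant first fundamental theorem to be the main obstacle: describing $(\Mat(v_i\times v_j)\otimes R)^G$ as the $R^G$-module generated by the $X_q$ is where the Lusztig viewpoint---freezing a vertex to turn equivariant matrix-valued maps into ordinary invariants on a larger quiver---does the real work, and one must extract exactly the multilinear part corresponding to a single use of the auxiliary arrow. The two supporting facts, exactness of invariants and the commutation of $\mathcal{R}$ with the contraction $H\mapsto\tr(HX_r)$, both use $\operatorname{char}\kk=0$, while the closing step uses only that $\ker(\beta)$ is two-sided so that multiplying $r$ by an arbitrary path keeps it in the kernel.
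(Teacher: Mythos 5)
Your proof is correct, but it follows a genuinely different route from the paper's. The generation statement is handled identically (surjectivity of $\tau$ on invariants via reductivity, Proposition~\ref{prop:RepAinvariants}). For the kernel, however, the paper adjoins to $Q$ one new arrow $a_k$ for \emph{each relation} $g_k$ generating $\ker(\beta)$, replaces each $\xpij{g_k}{ij}$ by a fresh coordinate $\xpij{a_k}{ij}$, computes the intersection of the resulting coordinate ideal with the invariant ring combinatorially (Proposition~\ref{prop:IainHKinvariants}: every invariant monomial lying in $I_{a_k}$ must involve a trace of a cycle traversing $a_k$), and then specialises back along the map $f'$ sending $\xpij{a_k}{ij}\mapsto\xpij{g_k}{ij}$ (Theorem~\ref{thm:I_K}). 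You instead stay on $Q$, write elements of $I$ as contractions $\tr(HX_r)$ with $r\in\ker(\beta)$, average $H$ with the Reynolds operator, and invoke a first fundamental theorem for covariants; your proposed proof of that covariant FFT --- adjoin a single arrow $c\colon i\to j$ and extract the part of $\kk[\Rep(Q\cup\{c\},v)]^{\GL(v)}$ linear in the new coordinates, which is legitimate since the action preserves that grading --- is essentially the same linearisation trick the paper uses, just packaged classically as a statement about equivariant matrix-valued functions rather than about ideal intersections. What your version buys is a cleaner conceptual path and a direct exhibition of the generators as $\tr_{qr}$ with $qr\in e_i\ker(\beta)e_i$; what the paper's version buys is that the identical argument handles the general frozen-vertex case $K\subseteq Q_0$ of Theorem~\ref{thm:introLBPF} (where one must also track the contraction-function covariants between unfrozen vertices), whereas your argument as written covers only $K=Q_0$, which is all Theorem~\ref{thm:LBPintro} requires.

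Two small points to tidy up. First, the parenthetical ``as $X_r$ is $G$-invariant'' is not right: the entries of $X_r$ are not invariant functions; $X_r$ is $G$-\emph{equivariant}, and the correct justification is that $H\mapsto\tr(HX_r)$ is a morphism of rational $G$-modules once $\Mat(v_i\times v_j)\otimes R$ carries the contragredient two-sided action, so it commutes with the Reynolds operator. Second, remember to include the trivial path $e_j$ (when $i=j$) among the spanning covariants $X_q$, since the identity matrix is itself an invariant element of $(\Mat(v_i\times v_i)\otimes R)^{G}$; this is needed so that $\tr_r$ itself, for $r\in e_i\ker(\beta)e_i$, is recovered.
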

 
 In fact, we prove a more general result for a class of invariant subalgebras of $\kk[\Rep(Q,v)]$ studied first by Lusztig~\cite{Lusztig98}. To define the class,  consider for any subset $K\subseteq Q_0$, the action of the subgroup 
 \[
 G_K:= \prod_{k\in K} \GL(v_k,\kk)
 \]
 of $\GL(v)$.  For any nontrivial path $p$ in $Q$ and for indices $1\leq i\leq v_{\head(p)}$ and $1\leq j\leq v_{\tail(p)}$, there is a contraction function $\pij\in \kk[\Rep(Q,v)]$ that associates to each representation, the $(i,j)$-entry of the product of matrices determined by the arrows in the path $p$. Lusztig~\cite[Theorem~1.3]{Lusztig98} proved that the $G_K$-invariant subalgebra $\kk[\Rep(Q,v)]^{G_K}$ is generated by the trace functions $\tr_\gamma$ for cycles $\gamma$ in $Q$ that traverse only arrows in $Q$ with head and tail in $K$, together with the contraction functions $\pij$ for paths $p$ in $Q$ with head and tail in  $Q_0\setminus K$, where 
 $1\leq i\leq v_{\head(p)}$ and $1\leq j\leq v_{\tail(p)}$.
 
 For any quiver algebra with presentation $\beta\colon \kk Q\to A$ as above, restricting functions to the representation subscheme $\Rep(A,v)\subseteq \Rep(Q,v)$ defines a surjective $\kk$-algebra homomorphism
 \begin{equation}
    \label{eqn:tauKintro}
 \tau_K\colon \kk[\Rep(Q,v)]^{G_K}\longrightarrow \kk[\Rep(A,v)]^{G_K},
 \end{equation}
 so the corresponding trace and contraction functions also generate $\kk[\Rep(A,v)]^{G_K}$. Our main result generalises Theorem~\ref{thm:LBPintro} by calculating the kernel of the map from \eqref{eqn:tauKintro} in terms of the quiver presentation $\beta\colon \kk Q\to A$, thereby describing the relations between Lusztig's algebra generators of $\kk[\Rep(A,v)]^{G_K}$ in terms of trace and contraction functions (see Proposition~\ref{prop:RepAinvariants} and Theorem~\ref{thm:I_K}):

\begin{theorem}
\label{thm:introLBPF}
 The algebra $\kk[\Rep(A,v)]^{G_K}$ is generated by the functions:
 \begin{enumerate}
    \item[\aLus] $\tr_\gamma$ for cycles $\gamma$ in $Q$ that traverse only arrows in $Q$ with head and tail in $K$; and
    \item[\bLus] $\pij$ for paths $p$ in $Q$ with head and tail in  $Q_0\setminus K$, where 
 $1\leq i\leq v_{\head(p)}$ and $1\leq j\leq v_{\tail(p)}$.
 \end{enumerate}  
 Moreover, $\ker(\tau_K)$ is generated by the functions $\tr_{\gamma}$, where $\gamma\in e_k\ker(\beta) e_k$ for some $k\in K$, and the functions $\xpij{p}{ij}$, where $p\in e_{\head}\ker(\beta) e_{\tail}$ for some $\head, \tail\in Q_0\setminus K$, where $1\leq i\leq v_{\head}$ and $1\leq j\leq v_{\tail}$.
\end{theorem}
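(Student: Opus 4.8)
The plan is to split the statement into the generation part and the kernel computation, and to reduce both to the linear reductivity of $G_K$ together with the first fundamental theorem of invariant theory for the general linear group. Throughout write $R=\kk[\Rep(Q,v)]$ and $B=R^{G_K}$, and let $I\subseteq R$ be the ideal cutting out the closed subscheme $\Rep(A,v)$, namely the ideal generated by all entries $\xpij{r}{ij}$ of the matrix-valued function $X_r$ attached to a relation $r\in\ker(\beta)$. Since $\ker(\beta)$ is a two-sided ideal of $\kk Q$ and $X_{sru}=X_sX_rX_u$ for composable paths $s,u$, the ideal $I$ is already generated by the entries of $X_r$ for a two-sided generating set of $\ker(\beta)$, and $I$ is $\GL(v)$-stable. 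As $\kk$ has characteristic zero, $G_K$ is linearly reductive, so the Reynolds operator $\mathcal{R}\colon R\to B$ is $B$-linear and restricts to the identity on $B$; consequently $\tau_K$ is surjective with $\ker(\tau_K)=I\cap B=\mathcal{R}(I)$.

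Generation is then immediate: Lusztig's theorem \cite{Lusztig98} provides the generating set of $B$ given by the functions of types \aLus\ and \bLus, and applying the surjection $\tau_K$ shows that $\kk[\Rep(A,v)]^{G_K}$ is generated by their restrictions, which is Proposition~\ref{prop:RepAinvariants}. For the kernel, let $J\subseteq B$ be the ideal generated by the functions $\tr_\gamma$ with $\gamma\in e_k\ker(\beta)e_k$ for some $k\in K$, together with the $\xpij{p}{ij}$ with $p\in e_h\ker(\beta)e_t$ for some $h,t\in Q_0\setminus K$; we must prove $\mathcal{R}(I)=J$ (Theorem~\ref{thm:I_K}). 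The inclusion $J\subseteq\ker(\tau_K)$ is straightforward: every listed generator arises from a relation, hence vanishes on $\Rep(A,v)$, and each is $G_K$-invariant because in a matrix product along a path the intermediate group elements telescope, leaving only the endpoint factors, which are trivial precisely when the endpoints lie in $Q_0\setminus K$, or cancel under the trace when the endpoints coincide at a vertex of $K$.

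The substance of the argument is the reverse inclusion $\mathcal{R}(I)\subseteq J$, for which it suffices to show $\mathcal{R}(g\cdot\xpij{r}{ab})\in J$ for every monomial $g$ in the arrow entries and every relation $r\in e_h\ker(\beta)e_t$. The key observation is that $\xpij{r}{ab}$ is already $\GL(v_{k'})$-invariant for every vertex $k'$ internal to $r$, since the index at $k'$ is contracted inside the product $X_r$; hence the Reynolds operator can act only on the external indices $a,b$, and these transform nontrivially only when $h$ or $t$ lies in $K$. Invoking the first fundamental theorem for $\prod_{k\in K}\GL(v_k)$, the invariant $\mathcal{R}(g\cdot\xpij{r}{ab})$ expands as a $\kk$-linear combination of products of trace functions of cycles closed at vertices of $K$ and contraction functions of paths with endpoints in $Q_0\setminus K$, assembled from the arrows of $g$ together with the single block $X_r$. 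Because $X_r$ enters as an intact block, exactly one factor of each resulting product traverses $r$ as a consecutive subpath; since $\ker(\beta)$ is a two-sided ideal, that factor is the trace of a cycle in $e_k\ker(\beta)e_k$ with $k\in K$, or the contraction of a path in $e_{h'}\ker(\beta)e_{t'}$ with $h',t'\in Q_0\setminus K$, in either case a generator of $J$, while the remaining factors lie in $B$. Thus each term lies in $J$, and summing yields $\mathcal{R}(g\cdot\xpij{r}{ab})\in J$.

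The main obstacle is precisely the bookkeeping in this last step: one must verify that contracting the external indices of the relation block $X_r$ against the arrows of $g$ never fragments $r$, so that the two-sided ideal property of $\ker(\beta)$ can be applied to the cycle or path that swallows it. The internal-invariance observation above is what localises the problem to the two endpoints of $r$ and rules out such fragmentation, but organising the FFT expansion so that the block $X_r$ is manifestly preserved, and identifying the unique factor that traverses it, is where the real work lies.
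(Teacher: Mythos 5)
Your reduction of the statement to the reverse inclusion $\mathcal{R}(I)\subseteq J$ is correct, as is the easy inclusion $J\subseteq\ker(\tau_K)$; but the reverse inclusion is exactly where the argument stops being a proof. You assert that the expansion of $\mathcal{R}(g\cdot\xpij{r}{ab})$ provided by the first fundamental theorem can be organised so that the relation matrix $X_r$ ``enters as an intact block'', with exactly one factor of each monomial traversing $r$ as a consecutive subpath --- and then you concede in your final paragraph that verifying this non-fragmentation is ``where the real work lies''. That concession is accurate: Lusztig's theorem for $Q$ only says that the invariant $\mathcal{R}(g\cdot\xpij{r}{ab})$ is \emph{some} polynomial in traces of cycles and contractions of paths of $Q$; since $\xpij{r}{ab}$ is itself just a polynomial in the arrow entries, nothing in that statement remembers the block $X_r$, and the ``internal invariance'' of $\xpij{r}{ab}$ at the interior vertices of $r$ does not by itself force any factor of the output to contain $r$ as a subword. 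The central claim is therefore unproved.

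The paper closes exactly this gap with a device you are missing: it augments $Q$ to a quiver $\overline{Q}$ by adjoining one genuine new arrow $a_k$ (with the same head and tail as $g_k$) for each generator $g_k$ of $\ker(\beta)$. In $\overline{Q}$ the block really is a single arrow, so applying Lusztig's theorem to $\overline{Q}$ makes ``intact block'' automatic; a separate monomial-comparison argument (Proposition~\ref{prop:IainHKinvariants}) then shows that the invariant part of the ideal $\langle\xaij{a_k}{ij}\rangle$ is generated by the traces and contractions of cycles and paths that traverse $a_k$, and Lemma~\ref{lem:Reynolds} handles the sum over $k$. Finally the substitution $\xpij{a_k}{ij}\mapsto\xpij{g_k}{ij}$ carries these generators into $e_k\ker(\beta)e_k$ or $e_{\head}\ker(\beta)e_{\tail}$ via the two-sided ideal property. (Note also that a path in $\overline{Q}$ may traverse several of the arrows $a_k$, which your ``exactly one factor'' phrasing overlooks; this is harmless, since the image under substitution still lies in $\ker(\beta)$, but it is another sign that the bookkeeping must be done in the augmented quiver rather than asserted.) To complete your argument you would need either to import this augmented-quiver construction or to supply an independent proof of the block-preservation claim; as written, the proposal correctly identifies the difficulty but does not resolve it.
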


This result interpolates naturally between the description of the invariant ring $\kk[\Rep(A,v)]^{\GL(v)}$ from Theorem~\ref{thm:LBPintro} in the case $K=Q_0$, and the natural description of $\kk[\Rep(A,v)]$ as the quotient of the polynomial ring $\kk[\xpij{a}{ij} \mid a\in Q_1]$ by the ideal $\langle \xpij{p}{ij} \mid p\in \ker(\beta)\rangle$ in the case $K=\varnothing$. 

 \medskip

In Section~\ref{sec:LBPF}, we recall Lusztig's generalisation of the Le Bruyn--Procesi theorem, and we provide a new proof of this result that adapts the framing trick of Crawley-Boevey~\cite[Section~1]{CBmoment} to construct a quiver that we build from $(Q,v)$ and our choice of subset $K$. Applying the original Le Bruyn--Procesi theorem to this framed quiver reproduces precisely Lusztig's algebra generators of $\kk[\Rep(Q,v)]^{G_K}$, which in turn allows us to determine the algebra generators of $\kk[\Rep(A,v)]^{G_K}$ from Theorem~\ref{thm:introLBPF} in Section~\ref{sec:quiveralgebras}.
Our main new contribution comes in Section~\ref{sec:relations} where we compute the relations between Lusztig's generators. Here, once again, our proof introduces an auxiliary quiver obtained from $Q$; this time, we add extra arrows for a given choice of generators of the two-sided ideal $\ker(\beta)$ in $\kk Q$. We conclude by presenting an illustrative example in Section~\ref{sec:example}.

\medskip

\noindent \textbf{Notation.} 
Throughout the paper, $\kk$ is an algebraically closed field of characteristic zero. 
Let $\Mat(m\times n)$ be the $\kk$-vector space of $m\times n$ matrices with entries in $\kk$. We write nontrivial paths in $Q$ as products $p = a_\ell\cdots a_1$ of arrows from right to left, starting at vertex $\tail(p):=\tail(a_1)$ and ending at vertex $\head(p):=\head(a_{\ell})$. A cycle in $Q$ is a path $p$ such that $\tail(p)=\head(p)$.

\medskip

\noindent \textbf{Acknowledgements.} We thank an anonymous referee for explaining that Theorem~\ref{thm:Lusztig} was proved first by Lusztig. Both authors were supported by Research Project Grant RPG-2021-149 from The Leverhulme Trust. The second author was also supported by JSPS KAKENHI Grant JP19K14504.

  \section{The Le Bruyn--Procesi theorem with frozen vertices}
  \label{sec:LBPF}
  
  Let $Q$ be a finite, connected quiver with vertex set $Q_0$ and arrow set $Q_1$. 
  Let $v=(v_i)\in \mathbb{N}^{Q_0}$ be a dimension vector. The representation space for the pair $(Q,v)$ is the $\kk$-vector space
\begin{equation}
    \label{eqn:Repold}
\Rep(Q,v)=\bigoplus_{a\in Q_1} \Mat(v_{\head(a)}\times v_{\tail(a)}),
\end{equation}
 where $\head(a)$ and $\tail(a)$ denote the vertices at the head and tail respectively of $a$. 
 
 For any nontrivial cycle $\gamma=a_\ell\cdots a_1$ in $Q$ obtained by traversing the arrows $a_1, a_2, \dots a_\ell$ in this order, the map sending $B=(B_a)_{a\in Q_1}\in \Rep(Q,v)$ to
 \[
 \tr_\gamma(B):= \tr\big(B_{a_\ell}\cdots B_{a_1}\big)
 \]
 is a $\GL(v)$-invariant polynomial function $\tr_\gamma\in \kk[\Rep(Q,v)]$ called the \emph{trace function} for the cycle $\gamma$. For each $i\in Q_0$, define the trace of the trivial path $e_i$ at vertex $i$ to be the constant function with value $\tr_{e_i}:=v_i$. Let $\kk Q$ denote the path algebra of $Q$. For any vertex $i\in Q_0$, each element $\gamma \in e_i \kk Q e_i$ is a linear combination $\gamma=\sum_k \lambda_k \gamma_k$ of cycles passing through $i$, and we obtain a trace function for $\gamma$ by extending the above definition linearly over $\kk$, that is,  $\tr_\gamma:= \sum_k \lambda_k \tr_{\gamma_k}$.
 
 Let $p=a_\ell\cdots a_1$ be a nontrivial path in $Q$ obtained by traversing the arrows $a_1, a_2, \dots a_\ell$ from vertex $\tail(p)$ to vertex $\head(p)$. For $B=(B_a)\in \Rep(Q,v)$, we have $B_{a_\ell}\cdots B_{a_1}\in \Mat(v_{\head(p)}\times v_{\tail(p)})$. 
 For $1\leq i\leq v_{\head(p)}$ and $1\leq j\leq v_{\tail(p)}$, the map sending $B$ to the $(i,j)$-entry of this matrix, denoted
 \[
 \pij(B):= \big(B_{a_\ell}\cdots B_{a_1}\big)_{ij},
 \]
 defines a polynomial function $\pij\in \kk[\Rep(Q,v)]$;  following \cite{KraftProcesi}, we call this the \emph{$(i,j)$-contraction function} of the path $p$. More generally, for any vertices $\tail, \head\in Q_0$, each element $g\in e_{\head}\kk Q e_{\tail}$ is a linear combination $g=\sum \lambda_l p_l$ of paths each with head and tail at $\head$ and $\tail$ respectively, and we extend linearly to obtain polynomial function $\xpij{g}{ij} := \sum_l \lambda_l \xpij{p_l}{ij}$ for each $1\leq i\leq v_{\head}$ and $1\leq j\leq v_{\tail}$.
 
 The decomposition \eqref{eqn:Repold} shows that the coordinate ring of $\Rep(Q,v)$ is the polynomial ring
 \begin{equation}
     \label{eqn:kRep}
 \kk\big[\!\Rep(Q,v)\big] = \bigotimes_{a\in Q_1} \kk\big[\aij\mid 1\leq i\leq v_{\head(a)}\text{ and }1\leq j\leq v_{\tail(a)}\big].
 \end{equation}
 Each contraction function $\pij$ can be expressed as a sum of monomials in this ring as follows.
 
 \begin{lemma}
 \label{lem:product}
 Let $p, q$ be nontrivial paths in $Q$ with $\head(p)=\tail(q)$. Then $\xpij{qp}{ij} = \sum_{1\leq k\leq v_{\head(p)}} \xpij{q}{ik} \xpij{p}{kj}$. In particular, if $p=a_\ell\cdots a_1$ is a nontrivial path in $Q$, then 
 \[
 \pij = \sum_{k_1, \dots, k_{\ell-1}} \xpij{a_{\ell}}{ik_{\ell-1}}\xpij{a_{\ell-1}}{k_{\ell-1}k_{\ell-2}}\cdots \xpij{a_{1}}{k_{1}j}
 \]
 where the sum runs over all indices satisfying $1\leq k_i\leq v_{\head(a_i)}$ for $1\leq i\leq \ell$. 
 \end{lemma}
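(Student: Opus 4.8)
The plan is to recognise both identities as nothing more than the entrywise description of a matrix product, transported along the fact that concatenation of paths corresponds to multiplication of the associated matrices.

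First I would prove the general product formula. Write $q=b_m\cdots b_1$ and $p=a_\ell\cdots a_1$; the hypothesis $\head(p)=\tail(q)$ guarantees that the concatenation $qp=b_m\cdots b_1a_\ell\cdots a_1$ is again a nontrivial path. For any $B=(B_a)\in\Rep(Q,v)$, the matrix assigned to $qp$ factors as
\[
B_{b_m}\cdots B_{b_1}B_{a_\ell}\cdots B_{a_1}=\big(B_{b_m}\cdots B_{b_1}\big)\big(B_{a_\ell}\cdots B_{a_1}\big),
\]
where the left factor lies in $\Mat(v_{\head(q)}\times v_{\tail(q)})$ and the right factor in $\Mat(v_{\head(p)}\times v_{\tail(p)})$; these are composable precisely because $v_{\tail(q)}=v_{\head(p)}$. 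Taking the $(i,j)$-entry and applying the standard formula for the entries of a product of matrices, with summation index $k$ running over $1\leq k\leq v_{\head(p)}$, yields $\xpij{qp}{ij}=\sum_k \xpij{q}{ik}\xpij{p}{kj}$ directly from the definition of the contraction functions.

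Next I would deduce the final displayed expansion by induction on the length $\ell$ of $p=a_\ell\cdots a_1$. The base case $\ell=1$ is immediate from the definition of $\xpij{a_1}{ij}$. For the inductive step, split $p=a_\ell\cdot(a_{\ell-1}\cdots a_1)$ and apply the product formula with the single arrow $a_\ell$ in the role of $q$; the induction hypothesis then expands $\xpij{a_{\ell-1}\cdots a_1}{kj}$ as the iterated sum over $k_1,\dots,k_{\ell-2}$, and relabelling $k=k_{\ell-1}$ assembles the claimed expression.

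I do not anticipate any genuine obstacle: the argument is elementary matrix arithmetic. The only point demanding care is the index bookkeeping, namely tracking that each summation index $k_i$ ranges over $1\leq k_i\leq v_{\head(a_i)}=v_{\tail(a_{i+1})}$, consistently with the right-to-left convention for writing paths fixed in the Notation.
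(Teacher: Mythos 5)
Your proposal is correct and follows essentially the same route as the paper's proof: both identify the claim as the entrywise formula for a product of matrices and then obtain the displayed expansion by induction on the length of the path. No gaps; the index bookkeeping you flag is handled consistently with the paper's right-to-left convention.
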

 \begin{proof}
    Write $p=a_\ell\cdots a_1$ and  $q=a_m\cdots a_{\ell+1}$. For $B\in \Rep(Q,v)$, the product of matrices gives
    \[
    \xpij{qp}{ij}(B)= \big(B_{a_m}\cdots B_{a_1}\big)_{ij} = \sum_{k=1}^{v_{\head(p)}} \big( B_{a_m}\cdots B_{a_{\ell+1}} \big)_{ik} \big(B_{a_\ell}\cdots B_{a_{1}} \big)_{kj} 
    = \sum_{k=1}^{v_{\head(p)}} \xpij{q}{ik}(B) \xpij{p}{kj}(B)   
    \]
    as required. The second statement follows by induction on the length of the path.
 \end{proof}

 The reductive group $\GL(v):=\prod_{i\in Q_0} \GL(v_i,\kk)$ acts by conjugation on $\Rep(Q,v)$, so it acts dually on the ring $\kk[\Rep(Q,v)]$. The diagonal scalar subgroup $\kk^\times = \{(\lambda\id_{v_i})_{i\in Q_0} \mid \lambda\neq 0\}$ of $\GL(v)$ acts trivially, so a polynomial is $\GL(v)$-invariant if and only if it is invariant under the action of $\GL(v)/\kk^\times$. A well-known result of Le Bruyn--Procesi~\cite{LBP90} established that the $\GL(v)$-invariant subalgebra $\kk[\Rep(Q,v)]^{\GL(v)}$ is generated by the trace functions $\tr_\Gamma$ associated to cycles $\gamma$ in $Q$.

 More generally, for any subset $K\subseteq Q_0$, consider the subgroup 
 \[
 G_K:= G_K(v)= \prod_{k\in K} \GL(v_k,\kk)
 \]
 of $\GL(v)$ indexed by vertices in $K$. 
 If the head and tail of a path $p$ lie in $K^c:=Q_0\smallsetminus K$, then each function $\pij$ is $G_K$-invariant. In this context, Lusztig~\cite[Theorem~1.3]{Lusztig98} established the following generalisation of the Le Bruyn--Procesi theorem.
   
\begin{theorem}[Lusztig]
\label{thm:Lusztig}
 For any subset $K\subseteq Q_0$, the algebra $\kk[\Rep(Q,v)]^{G_K}$ is generated by:
 \begin{enumerate}
 \item[\aLus] the trace functions $\tr_\gamma$ associated to cycles $\gamma$ in $Q$ that traverse only arrows in $Q$ with head and tail in $K$; and
 \item[\bLus] the contraction functions $\pij$ for paths $p$ in $Q$ with head and tail in $K^c$, where the indices satisfy $1\leq i\leq v_{\head(p)}$ and $1\leq j\leq v_{\tail(p)}$.
 \end{enumerate}
 \end{theorem}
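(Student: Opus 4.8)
The plan is to reduce Lusztig's theorem to the classical Le Bruyn--Procesi theorem by a framing construction in the spirit of Crawley-Boevey~\cite{CBmoment}, so that the frozen vertices in $K^c$ are converted into genuine framing data on which a general linear group acts in the usual way.

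First I would dispose of the easy inclusion, namely that every function in \aLus\ and \bLus\ is $G_K$-invariant. For a cycle $\gamma$ whose arrows all have head and tail in $K$, the conjugating factors attached to the matrix product $B_{a_\ell}\cdots B_{a_1}$ telescope around the cycle, leaving $\tr_\gamma$ unchanged under $G_K$. For a path $p$ with $\head(p),\tail(p)\in K^c$, the product $B_{a_\ell}\cdots B_{a_1}$ transforms as $g_{\head(p)}(B_{a_\ell}\cdots B_{a_1})g_{\tail(p)}^{-1}$, and since $G_K$ acts trivially at the vertices of $K^c$ we have $g_{\head(p)}=g_{\tail(p)}=\id$, so each entry $\pij$ is invariant. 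The whole content therefore lies in the reverse inclusion: that the functions in \aLus\ and \bLus\ generate the full invariant ring.

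For this generation statement I would introduce an auxiliary quiver $\widehat{Q}$ obtained from $Q$ by adjoining a single framing vertex $\infty$ of dimension $1$, together with framing arrows joining $\infty$ to each frozen vertex in $K^c$, chosen so that: (i) the conjugation action of the frozen factors $\prod_{i\in K^c}\GL(v_i)$ is absorbed by the framing, leaving a residual group that is essentially $G_K$; and (ii) every contraction function $\pij$ of a path $p$ with $\head(p),\tail(p)\in K^c$ is realised as the trace function of a cycle in $\widehat{Q}$ that leaves $\infty$, traverses $p$ through the vertices of $Q$, and returns to $\infty$. Concretely, restricting to the locus where the framing arrows take their standard values identifies $\Rep(Q,v)$ with a slice of $\Rep(\widehat{Q},\widehat{v})$, and under this identification the trace functions of cycles in $\widehat{Q}$ should restrict to the trace functions $\tr_\gamma$ for cycles $\gamma$ in $K$ and to the contraction functions $\pij$ for paths $p$ in $K^c$. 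Applying the classical Le Bruyn--Procesi theorem to $\widehat{Q}$ with its full group $\GL(\widehat{v})$ then shows that the invariant ring of $\widehat{Q}$ is generated by traces of cycles, and transporting these generators across the identification yields exactly the functions in \aLus\ and \bLus.

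The hard part will be setting up the framing so that this correspondence is exact on both sides. One must choose enough framing arrows to separate all the matrix entries recorded by the contraction functions in \bLus, so that no $K^c$-information is lost, while not creating spurious invariants; and one must show that the residual group acting on the framing slice is genuinely $G_K$ rather than $G_K$ enlarged by the residual scalars coming from the dimension-one framing vertex $\infty$. Controlling this extra torus, and checking that the cycles of $\widehat{Q}$ passing through $\infty$ restrict to precisely the contraction functions $\pij$ and contribute nothing more, is where the essential work lies. Once this dictionary between cycles in $\widehat{Q}$ and Lusztig's generators is established, the theorem follows immediately from Le Bruyn--Procesi applied to $\widehat{Q}$.
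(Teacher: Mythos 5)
Your high-level strategy --- frame the quiver following Crawley-Boevey, apply Le Bruyn--Procesi to the framed quiver, and transport the trace generators back --- is exactly the strategy of the paper, and your observations about the easy inclusion and about the danger of the extra torus at $\infty$ are both correct (the latter is harmless: the diagonal scalar subgroup $\kk^\times\subseteq\GL(v')$ acts trivially, so $\GL(v')$-invariance coincides with invariance under $\GL(v')/\kk^\times\cong G_K$). However, your concrete implementation differs from the paper's in a way that leaves a genuine gap. You propose to \emph{keep} the frozen vertices in $\widehat{Q}$, attach framing arrows between $\infty$ and each vertex of $K^c$, apply Le Bruyn--Procesi for the \emph{full} group $\GL(\widehat{v})$ (which still contains the frozen factors $\prod_{i\in K^c}\GL(v_i)$), and then restrict to a slice where the framing arrows take standard values. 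The problem is that the invariant ring for this larger group is \emph{smaller} than the ring you are trying to generate, and there is no a priori reason why restricting $\GL(\widehat{v})$-invariants to your slice should surject onto the $G_K$-invariants of the slice; for instance, for an arrow $a$ with both endpoints in $K^c$, the individual coordinate functions $\xaij{a}{ij}$ are $G_K$-invariant but are not restrictions of $\GL(\widehat{v})$-invariants in any obvious way. Making this work requires an associated-bundle or classical-invariant-theory argument (essentially the first fundamental theorem for $\GL$) that you have flagged as ``the hard part'' but not supplied --- and that is precisely where the content of the theorem lies.

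The paper sidesteps all of this by \emph{deleting} the frozen vertices rather than slicing: the new quiver $Q'$ has vertex set $K\cup\{\infty\}$, each arrow $b$ from a frozen vertex into $K$ is replaced by $v_{\tail(b)}$ arrows out of $\infty$ (identifying a matrix in $\Mat(v_{\head(b)}\times v_{\tail(b)})$ with its columns), each arrow out of $K$ into a frozen vertex is replaced dually by rows, and the arrows with both endpoints in $K^c$ are split off as a polynomial tensor factor on which $G_K$ acts trivially. This yields a $G_K$-equivariant \emph{linear isomorphism} of representation spaces (Lemma~\ref{lem:varphi}) --- no slice, no stabiliser computation, no surjectivity issue --- after which Le Bruyn--Procesi applies verbatim to $(Q',v')$ and the cycles through $\infty$ are matched entry-by-entry with the contraction functions $\pij$. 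I would encourage you to replace your slice construction with this column/row identification; with that change your outline becomes a complete proof.
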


This result interpolates naturally between the description of the invariant ring $\kk[\Rep(Q,v)]^{\GL(v)}$ by Le Bruyn--Procesi~\cite{LBP90} in the case $K=Q_0$, and the description of the polynomial ring $\kk[\Rep(Q,v)]$ from \eqref{eqn:kRep} in the case $K=\varnothing$. 

\begin{remark}
The invariant algebra $\kk[\Rep(Q,v)]^{G_K}$ contains the polynomial ring
\begin{equation}
    \label{eqn:kReppoly}
 \bigotimes_{\{a\in Q_1 \;\mid \;\head(a), \tail(a)\in K^c\}} \kk\big[\aij\mid 1\leq i\leq v_{\head(a)}\text{ and }1\leq j\leq v_{\tail(a)}\big].
 \end{equation}
\end{remark}

We present a simple new proof of Theorem~\ref{thm:Lusztig} that uses a framing trick similar to that introduced by Crawley-Boevey~\cite[Section~1]{CBmoment}. For any subset $K\subseteq Q_0$, we partition the arrow set $Q_1$ as the disjoint union of the following sets:
\begin{eqnarray*}
S_{0,0}& := & \{a\in Q_1 \mid \tail(a), \head(a)\in K^c\}; \\
S_{0,1} & := & \{a\in Q_1 \mid \tail(a)\in K^c, \head(a)\in K\}; \\
S_{1,0}& := & \{a\in Q_1 \mid \tail(a)\in K, \head(a)\in K^c\}; \\
S_{1,1}& := & \{a\in Q_1 \mid \tail(a), \head(a)\in K\}.
\end{eqnarray*}
Given a triple $(Q, v, K)$ as above, we construct a new quiver $Q'$ whose vertex set $Q_0^\prime$ is obtained from $K$ by adjoining a new vertex $\infty$, and whose arrow set $Q_1^\prime$ is obtained from  $S_{1,1}$ by adjoining two additional collections of arrows, namely:
\begin{enumerate}
    \item for each $b\in S_{0,1}$, add arrows $\alpha_b^1, \dots, \alpha_b^{v_{\tail(b)}}$ to $Q_1^\prime$, 
    each with tail at $\infty$ and head at $\head(b)$;
    \item for each $c\in S_{1,0}$, add arrows $\beta_c^1, \dots, \beta_c^{v_{\head(c)}}$ to $Q_1^\prime$, each with tail at $\tail(c)$ and head at $\infty$.
\end{enumerate}
 Now define a dimension vector $v^\prime\in \NN^{Q_0^\prime}=\NN\times \NN^{Q_0}$ by setting $v^\prime_\infty = 1$ and $v_k^\prime = v_k$ for $k\in K$. The representation space for the pair $(Q^\prime,v^\prime)$ is the $\kk$-vector space 
\begin{equation}
    \label{eqn:Repprime}
\Rep(Q^\prime,v^\prime)=\bigoplus_{a\in S_{1,1}} \Mat(v_{\head(a)}\times v_{\tail(a)}) \oplus \bigoplus_{b\in S_{0,1}} \Mat(v_{\head(b)}\times 1)^{\oplus v_{\tail(b)}}\oplus  \bigoplus_{c\in S_{1,0}} \Mat(1\times v_{\tail(c)})^{\oplus v_{\head(c)}}.
\end{equation}

 The group $\GL(v^\prime)= \prod_{k\in Q_0^\prime} \GL(v_k^\prime,\kk)$ acts on $\Rep(Q^\prime, v^\prime)$ by conjugation, and the diagonal scalar subgroup $\kk^\times$ acts trivially. Since $v_\infty^\prime = 1$, projection away from the $\infty$ coordinate identifies the action of the quotient group $\GL(v')/\kk^\times$ on $\Rep(Q',v')$ with the natural action of the subgroup $G_K\cong \prod_{k\in K} \GL(v_k)$ of $\GL(v')$.

 \begin{lemma}
 \label{lem:varphi}
 There is an $G_K$-equivariant isomorphism of $\kk$-vector spaces 
 \[
 \varphi\colon \Rep(Q,v)\longrightarrow 
\Rep(Q',v')\times \bigoplus_{a\in S_{0,0}} \Mat(v_{\head(a)}\times v_{\tail(a)}).
\]
 \end{lemma}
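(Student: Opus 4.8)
The plan is to build $\varphi$ as a direct sum of four linear isomorphisms, one for each arrow class $S_{0,0}, S_{0,1}, S_{1,0}, S_{1,1}$, matching the corresponding summands on the two sides. First I would rewrite the source using the partition of $Q_1$, namely
\[
\Rep(Q,v)=\bigoplus_{a\in S_{1,1}}\Mat(v_{\head(a)}\times v_{\tail(a)})\oplus\bigoplus_{b\in S_{0,1}}\Mat(v_{\head(b)}\times v_{\tail(b)})\oplus\bigoplus_{c\in S_{1,0}}\Mat(v_{\head(c)}\times v_{\tail(c)})\oplus\bigoplus_{a\in S_{0,0}}\Mat(v_{\head(a)}\times v_{\tail(a)}),
\]
and compare this term by term with the description of the target given by \eqref{eqn:Repprime} together with the extra $S_{0,0}$ factor. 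The $S_{1,1}$ summands and the $S_{0,0}$ summands already appear verbatim on the right-hand side, so on these two classes I take $\varphi$ to be the identity.

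For the remaining two classes I use the elementary identification of a matrix with its list of columns (resp.\ rows). For $b\in S_{0,1}$, since $v'_\infty=1$ each $\alpha_b^s$ contributes a column vector in $\Mat(v_{\head(b)}\times 1)$, and there are $v_{\tail(b)}$ of them; I define $\varphi$ on this summand by sending $B_b$ to the tuple of its $v_{\tail(b)}$ columns, so the $s$-th column becomes the value on $\alpha_b^s$. Dually, for $c\in S_{1,0}$ each $\beta_c^t$ contributes a row vector in $\Mat(1\times v_{\tail(c)})$ and there are $v_{\head(c)}$ of them; I send $B_c$ to the tuple of its $v_{\head(c)}$ rows, with the $t$-th row becoming the value on $\beta_c^t$. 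Each of these two maps is a bijective linear map between vector spaces of equal dimension, and assembling the four pieces gives a linear isomorphism $\varphi$.

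The substantive point, and the only place requiring care, is $G_K$-equivariance, which hinges on the identification of $\GL(v')/\kk^\times$ with $G_K$ via projection away from the $\infty$-coordinate. Concretely, since $v'_\infty=1$ I may represent any class in $\GL(v')/\kk^\times$ by an element whose $\infty$-component is $\id$, so that the action reduces to the $G_K$-action through the vertices of $K$. On $S_{1,1}$ and $S_{0,0}$ equivariance is immediate, since the identity intertwines the two identical actions. For $b\in S_{0,1}$ the tail lies in $K^c$, so $G_K$ acts on $B_b$ only by left multiplication by $g_{\head(b)}$; as left multiplication acts columnwise, it matches the $G_K$-action on each $\alpha_b^s$, namely left multiplication by $g_{\head(b)}$ with the $\infty$-factor trivial. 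Symmetrically, for $c\in S_{1,0}$ the head lies in $K^c$, so $G_K$ acts on $B_c$ only by right multiplication by $g_{\tail(c)}^{-1}$, which acts rowwise and matches the action on each $\beta_c^t$. I expect no real obstacle beyond bookkeeping the column/row conventions and correctly trivialising the one-dimensional $\infty$-factor; once these are fixed, equivariance is a one-line check on each class.
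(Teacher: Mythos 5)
Your proof is correct and follows essentially the same route as the paper: decompose $\Rep(Q,v)$ according to the partition of $Q_1$, take the identity on the $S_{0,0}$ and $S_{1,1}$ summands, and identify each matrix for $b\in S_{0,1}$ (resp.\ $c\in S_{1,0}$) with its columns (resp.\ rows), checking that left (resp.\ right) multiplication by $G_K$ is respected. Your equivariance check is in fact slightly more detailed than the paper's.
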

 \begin{proof}
  Define $\varphi$ to be the direct sum of four maps, one for each component in the decomposition
  \begin{equation}
      \label{eqn:rep4sums}
  \Rep(Q,v)=\bigoplus_{0\leq \lambda, \mu\leq 1} \left(\bigoplus_{a\in S_{\lambda, \mu}} \Mat(v_{\head(a)}\times v_{\tail(a)})\right),
   \end{equation}
   as follows. The group $G_K$ acts trivially on the summand indexed by $\lambda=\mu=0$ that also appears in the codomain of $\varphi$, while $G_K$ acts naturally by conjugation on the summand indexed by $\lambda=\mu=1$ that appears in the decomposition \eqref{eqn:Repprime} of $\Rep(Q',v')$. In each case, define the corresponding summand of $\varphi$ to be the identity map. The remaining two summands of $\varphi$ are the isomorphisms
   \[
  \bigoplus_{a\in S_{0,1}} \Mat(v_{\head(a)}\times v_{\tail(a)})\longrightarrow 
     \bigoplus_{a\in S_{0,1}} \Mat(v_{\head(a)}\times 1)^{\oplus v_{\tail(a)}}
    \]
   and
   \[
   \bigoplus_{a\in S_{1,0}} \Mat(v_{\head(a)}\times v_{\tail(a)})\longrightarrow 
     \bigoplus_{a\in S_{1,0}} \Mat(1\times v_{\tail(a)})^{\oplus v_{\head(a)}}
   \]
   that identify each matrix with its column vectors and its row vectors respectively. Observe that the first of these maps is $G_K$-equivariant with respect to the natural action of $G_K$ on the left, while the second is $G_K$-equivariant with respect to the natural action of $G_K$ on the right.  Therefore $\varphi$ is the direct sum of four $\kk$-linear isomorphisms, each of which is $G_K$-equivariant. 
  \end{proof}

 Recall that $f\in \kk[\Rep(Q',v')]$ is $\GL(v')$-invariant if and only if it is $\GL(v')/\kk^\times$-invariant. Since the action of $G_K$ on $\Rep(Q',v')$ is identified with the natural action of $\GL(v')/\kk^\times$, we obtain:
 
 \begin{corollary}
Pullback along $\varphi$ induces an isomorphism of invariant subalgebras
 \[
 \varphi^*\colon \kk[\Rep(Q',v')]^{\GL(v')}\otimes_{\kk} \bigotimes_{a\in S_{0,0}} \kk\big[\!\Mat(v_{\head(a)}\times v_{\tail(a)})\big] \longrightarrow \kk[\Rep(Q,v)]^{G_K}.
 \]
 \end{corollary}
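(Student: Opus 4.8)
The plan is to deduce the corollary directly from the $G_K$-equivariant $\kk$-linear isomorphism $\varphi$ of Lemma~\ref{lem:varphi}, by passing to coordinate rings and then extracting invariants. First I would observe that, since $\varphi$ is an isomorphism of $\kk$-vector spaces, its pullback $\varphi^*$ is an isomorphism of the full coordinate rings; and because the coordinate ring of a product of affine spaces is the tensor product of the individual coordinate rings, this reads
\[
\varphi^*\colon \kk[\Rep(Q',v')]\otimes_{\kk} \bigotimes_{a\in S_{0,0}} \kk\big[\!\Mat(v_{\head(a)}\times v_{\tail(a)})\big]\xrightarrow{\ \sim\ } \kk[\Rep(Q,v)].
\]

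Next, since $\varphi$ is $G_K$-equivariant, $\varphi^*$ intertwines the two $G_K$-actions and hence restricts to an isomorphism between the respective $G_K$-invariant subalgebras; on the codomain this invariant subalgebra is $\kk[\Rep(Q,v)]^{G_K}$, which is precisely the target of the corollary, so it remains to identify the $G_K$-invariants of the domain.

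For the domain I would combine two observations. First, $G_K$ acts trivially on each tensor factor $\kk\big[\!\Mat(v_{\head(a)}\times v_{\tail(a)})\big]$ with $a\in S_{0,0}$; and for any $G_K$-module of the form $V\otimes_{\kk} W$ with $W$ carrying the trivial action, expanding an element in a $\kk$-basis of $W$ shows at once that $(V\otimes_{\kk} W)^{G_K}=V^{G_K}\otimes_{\kk} W$. Applying this with $V=\kk[\Rep(Q',v')]$ gives
\[
\Big(\kk[\Rep(Q',v')]\otimes_{\kk} \bigotimes_{a\in S_{0,0}} \kk\big[\!\Mat(v_{\head(a)}\times v_{\tail(a)})\big]\Big)^{G_K}=\kk[\Rep(Q',v')]^{G_K}\otimes_{\kk} \bigotimes_{a\in S_{0,0}} \kk\big[\!\Mat(v_{\head(a)}\times v_{\tail(a)})\big].
\]
Second, the discussion preceding the statement identifies the $G_K$-action on $\Rep(Q',v')$ with the action of $\GL(v')/\kk^\times$, and a function is $\GL(v')$-invariant exactly when it is $\GL(v')/\kk^\times$-invariant; hence $\kk[\Rep(Q',v')]^{G_K}=\kk[\Rep(Q',v')]^{\GL(v')}$. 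Substituting this into the displayed equality recovers exactly the domain of the corollary, completing the argument.

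I do not anticipate a genuine obstacle: the argument is essentially bookkeeping once Lemma~\ref{lem:varphi} is in hand. The only step meriting a moment of care is the commutation of invariants with the tensor product, $(V\otimes_{\kk} W)^{G_K}=V^{G_K}\otimes_{\kk} W$, which holds cleanly because $W$ is a trivial module admitting a $\kk$-basis of invariant elements. Alternatively, in characteristic zero with $G_K$ reductive one could invoke exactness of the invariants functor, but the elementary basis argument is enough.
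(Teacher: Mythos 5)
Your proposal is correct and follows essentially the same route as the paper: the corollary there is presented as an immediate consequence of the $G_K$-equivariant isomorphism of Lemma~\ref{lem:varphi} together with the observation that the $G_K$-action on $\Rep(Q',v')$ is identified with that of $\GL(v')/\kk^\times$, whose invariants agree with the $\GL(v')$-invariants. Your write-up merely makes explicit the bookkeeping steps (passing to coordinate rings, and commuting invariants past the trivially-acting tensor factor) that the paper leaves implicit.
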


\begin{proof}[Proof of Theorem~\ref{thm:Lusztig}]
Carry the generators of the domain of $\varphi^*$ across the isomorphism $\varphi^*$. First,  $\varphi^*$ identifies $\bigoplus_{a\in S_{0,0}} \kk\big[\!\Mat(v_{\head(a)}\times v_{\tail(a)})\big]$ with the subalgebra \eqref{eqn:kReppoly} of $\kk[\Rep(Q,v)]^{G_K}$, so 
\begin{equation}
    \label{eqn:easygens} 
\big\{\aij\mid a\in S_{0,0}\text{ where } 1\leq i\leq v_{\head(a)}\text{ and }1\leq j\leq v_{\tail(a)}\big\}
\end{equation}
 provide some of the algebra generators of $\kk[\Rep(Q,v)]^{G_K}$. The remaining generators are the images under $\varphi^*$ of the generators of $\kk[\Rep(Q',v')]^{\GL(v')}$ which, by the main result of Le Bruyn--Procesi~\cite{LBP90}, are the trace functions $\tr_{\gamma'}$ associated to cycles $\gamma'$ in $Q'$. There are two cases:

 \medskip

 \noindent \textsc{Case 1:} \emph{the cycle $\gamma'$ in $Q'$ traverses only arrows in the set $S_{1,1}$}.

 \smallskip

 Any such cycle $\gamma'=a_\ell\cdots a_1$ may be viewed as a cycle $\gamma$ in $Q$, and the matrices $B_{a_\ell}, \dots, B_{a_1}$ appear as components in the summand $\bigoplus_{a\in S_{1,1}} \Mat(v_{\head(a)}\times v_{\tail(a)})$ from \eqref{eqn:rep4sums} on which $\varphi$ is the identity map. Thus, the map $\varphi^*$ identifies the generator $\tr_{\gamma'}$ of $\kk[\Rep(Q',v')]^{\GL(v')}$ with the function $\tr_\gamma\in \kk[\Rep(Q,v)]^{\GL(v)}$ for the corresponding cycle $\gamma$ in $Q$. These cycles $\gamma$ in $Q$ are precisely those that traverse only arrows in $Q$ that have both head and tail in $K$, so we obtain the first set of generators listed in Theorem~\ref{thm:Lusztig}.
  
 \medskip

 \noindent \textsc{Case 2:} \emph{the cycle $\gamma'$ in $Q'$ traverses at least one arrow with head or tail at vertex $\infty$}.

 \smallskip
 Since trace is invariant under cyclic permutation of matrices, we may assume without loss of generality that the tail of the cycle $\gamma'$ in $Q'$ is at vertex $\infty$. Also, since $v'_\infty=1$, it is enough to consider such cycles $\gamma'$ that don't touch $\infty$ except at the head and tail of the cycle. Moreover, 
 \begin{equation}
     \label{eqn:pdecomp}
\gamma'=\beta_c^{i}a_{\ell-1}\cdots a_2\alpha_b^{j}
 \end{equation}
 for some $b\in S_{0,1}$ and $c\in S_{1,0}$, with $1\leq i\leq v_{\head(c)}$ and $1\leq j\leq v_{\tail(b)}$, where the head and tail of each arrow $a_2, \dots, a_{\ell-1}$ in $Q'$ lies in $K$. Since $v'_\infty=1$, the trace function $\tr_{\gamma'}$  sends each point $B'=(B'_a)\in \Rep(Q',v')$ to the $1\times 1$ matrix 
 \[
 \tr_{\gamma'}(B') = B_{\beta_c^{i}}B_{a_{\ell-1}}\cdots B_{a_2}B_{\alpha_b^{j}}
 \]
  associated to the composition  of arrows appearing in \eqref{eqn:pdecomp}. More generally, fixing $b\in S_{0,1}$ and $c\in S_{1,0}$, but letting $1\leq i\leq v_{\head(c)}$ and $1\leq j\leq v_{\tail(b)}$ vary, produces a collection of $v_{\head(c)} v_{\tail(b)}$ scalars of the form $\tr_{\gamma'(i,j)}$, where now $\gamma'(i,j)$ denotes the cycle as in \eqref{eqn:pdecomp} for the various $1\leq i\leq v_{\head(c)}$ and $1\leq j\leq v_{\tail(b)}$. Examining the proof of Lemma~\ref{lem:varphi} shows that these scalars are precisely the entries in the $v_{\head(c)}\times v_{\tail(b)}$ matrix $B_cB_{a_{\ell-1}}\cdots B_{a_2}B_b$ for a point $(B_a)\in \Rep(Q,v)$. Therefore 
 \begin{equation}
     \label{eqn:case2} 
 \varphi^*(\tr_{\gamma'(i,j)}) = \pij
 \end{equation}
 for the path $p=ca_{\ell-1}\cdots a_2b$ in $Q$ and for $1\leq i\leq v_{\head(p)}$ and $1\leq j\leq v_{\tail(p)}$, where now we regard each of $a_2, \dots, a_{\ell-1}$ as in arrow in $Q$. Note that every such path $p$ in $Q$ has both head and tail in $K^c$, so every such $\pij$ is among the second set of generators listed in Theorem~\ref{thm:Lusztig}.

 \medskip
 
 As a result, each $\kk$-algebra generator of $\kk[\Rep(Q,v)]^{G_K}$ obtained via the isomorphism $\varphi^*$ either lies in the set \eqref{eqn:easygens}, or it is obtained as in \textsc{Case 1} or \textsc{Case 2}, so each is as in Theorem~\ref{thm:Lusztig}.
\end{proof}


\section{Lusztig's generators for a quiver algebra} 
\label{sec:quiveralgebras}
A \emph{relation} for a quiver $Q$ is a finite linear combination of paths in $Q$, each with the same head and tail. Thus, for each relation $g$, there are vertices $\tail(g), \head(g)\in Q_0$ such that $g$ lies in the $\kk$-vector subspace $e_{\head(g)} \kk Q e_{\tail(g)}$ spanned by paths with tail $\tail(g)$ and head $\head(g)$; explicitly, 
\begin{equation}
    \label{eqn:relation}
 g = \sum_{1\leq l\leq \ell} \lambda_l p_l,
\end{equation}
 where $\lambda_1, \dots,\lambda_\ell\in \kk$, and paths $p_1, \dots, p_\ell$ satisfy $\tail(p_l)=\tail(g)$ and $\head(p_l)=\head(g)$ for all $1\leq l\leq \ell$. 
 
 A \emph{quiver algebra} is a $\kk$-algebra $A$ for which there exists a quiver $Q$ and a surjective homomorphism
  \begin{equation}
   \label{eqn:quiveralgebra}
 \beta\colon \kk Q \longrightarrow A
 \end{equation}
 of $\kk$-algebras whose kernel is a two-sided ideal $\langle g_1, \dots, g_m\rangle$, where each $g_i$ is a relation in $Q$. The algebra $A$ is $Q_0$-bigraded, in the sense that $A=\bigoplus_{\tail, \head\in Q_0} e_{\head} A e_{\tail}$, where $e_{\head} A e_{\tail}$ is the $\kk$-vector subspace of $A$ spanned by the classes of paths with head and tail at vertex $\head$ and $\tail$ respectively.
 
 Let $v\in \NN^{Q_0}$ be a dimension vector. Then $B=(B_a)\in \Rep(Q,v)$ 
 \emph{satisfies the relation \eqref{eqn:relation}} if
 \begin{equation}
     \label{eqn:matrixrelation}
 \sum_{1\leq l \leq \ell}\lambda_l B_{p_l}= 0.
  \end{equation}
 The matrix equation  \eqref{eqn:matrixrelation} encodes a collection of $v_{\tail(g)}v_{\head(g)}$ equations in the entries of the matrices in $B$. Thus, the representation $B$ satisfies the relation \eqref{eqn:relation} if and only if $B$, when regarded as a point in the affine space $\Rep(Q,v)$, lies in the subscheme cut out by the equations
 \begin{equation}
     \label{eqn:variablerelation}
 \xpij{g}{ij}=\sum_{1\leq l \leq \ell} \lambda_l \plij =0
  \end{equation}
 for all $1\leq i\leq v_{\head(g)}$ and $1\leq j\leq v_{\tail(g)}$. More generally, for any quiver algebra $A =\kk Q/\langle g_1, \dots, g_m\rangle$, the \emph{representation scheme} $\Rep(A,v)$ is the subscheme of $\Rep(Q,v)$ cut out by the ideal
 \begin{equation}
 \label{eqn:idealI}
 I:=\big\langle \xpij{g_k}{ij} \mid 1\leq k\leq m, \; 1\leq i\leq v_{\head(g_k)}, \; 1\leq j\leq v_{\tail(g_k)}\big\rangle.
 \end{equation}
 This ideal does not depend on the choice of generators of the defining ideal $\langle g_1, \dots, g_m\rangle$ of $A$. Write
   \[
   \tau\colon \kk[\Rep(Q,v)]\longrightarrow \kk[\Rep(A,v)]
   \]
   for the surjective $\kk$-algebra homomorphism given by restricting functions.
 
 \begin{lemma}
 \label{lem:restrictfunctions}
 For a quiver algebra $A$ with presentation \eqref{eqn:quiveralgebra},
 fix vertices $\head, \tail\in Q_0$ and $f\in e_{\head} A e_{\tail}$. For $1\leq i\leq v_{\head}$, $1\leq j\leq v_{\tail}$, the function $\xpij{f}{ij}:= \xpij{p}{ij}\vert_{\Rep(A,v)}$ in $\kk[\Rep(A,v)]$ obtained by restriction is well-defined independent of the choice of lift $p\in \beta^{-1}(f)$. Thus, for any $p\in e_{\head}\kk Q e_{\tail}$, we have 
  \[
  \tau(\xpij{p}{ij})=\xpij{p}{ij}\vert_{\Rep(A,v)} = \xpij{\beta(p)}{ij}.
  \]
 Similarly, the trace function of each cycle $\gamma$ in $Q$ satisfies $\tau(\tr_\gamma)=\tr_\gamma\vert_{\Rep(A,v)} = \tr_{\beta(\gamma)}$. 
 \end{lemma}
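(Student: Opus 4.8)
The plan is to reduce all three assertions to a single vanishing statement about the defining ideal $I$ of $\Rep(A,v)$, and then to exploit the multiplicativity of contraction functions recorded in Lemma~\ref{lem:product}. Since $\tau$ is restriction of functions, the equality $\tau(\xpij{p}{ij})=\xpij{p}{ij}\vert_{\Rep(A,v)}$ is immediate from the definition of $\tau$, and $\xpij{p}{ij}\vert_{\Rep(A,v)}=\xpij{\beta(p)}{ij}$ holds by the definition of $\xpij{\beta(p)}{ij}$ (take $p$ itself as a lift) once the well-definedness claim is known. The trace identity then follows from the contraction case by writing $\tr_\gamma=\sum_{i=1}^{v_k}\xpij{\gamma}{ii}$ for a cycle $\gamma$ at a vertex $k$ and using linearity of restriction. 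So the crux is to show that $\xpij{p}{ij}\vert_{\Rep(A,v)}$ depends only on $\beta(p)$; equivalently, that $\xpij{h}{ij}$ vanishes on $\Rep(A,v)$ for every $h\in e_{\head}\ker(\beta)e_{\tail}$ and all admissible $i,j$.

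First I would use $\ker(\beta)=\langle g_1,\dots,g_m\rangle$ to write such an $h$ as a finite sum $h=\sum_k u_k\,g_{i_k}\,w_k$ with $u_k,w_k\in\kk Q$. Multiplying on the left by $e_{\head}$ and on the right by $e_{\tail}$, and using that each relation $g_{i_k}$ is bigraded with a fixed head and tail so that the neighbouring idempotents can be absorbed into the adjacent factors, I can arrange that $u_k\in e_{\head}\kk Q\,e_{\head(g_{i_k})}$ and $w_k\in e_{\tail(g_{i_k})}\kk Q\,e_{\tail}$, making the three factors in each summand composable.

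Next I would apply Lemma~\ref{lem:product} twice, once to split off $u_k$ and once to split off $w_k$, extended $\kk$-linearly (with the convention that a trivial path contributes the identity matrix), to obtain
\[
\xpij{u_k g_{i_k} w_k}{ij}=\sum_{r,s} \xpij{u_k}{ir}\,\xpij{g_{i_k}}{rs}\,\xpij{w_k}{sj},
\]
where $r,s$ range over the relevant index sets. Each middle factor $\xpij{g_{i_k}}{rs}$ is one of the generators of the ideal $I$ from \eqref{eqn:idealI}, hence restricts to zero on $\Rep(A,v)$; therefore every summand vanishes, and so does $\xpij{h}{ij}$. This yields well-definedness, after which the remaining identities are immediate.

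The main obstacle I anticipate is bookkeeping rather than substance: ensuring composability of the factors $u_k,g_{i_k},w_k$ so that Lemma~\ref{lem:product} genuinely applies, and correctly handling the degenerate case in which $u_k$ or $w_k$ is a scalar multiple of a trivial path, where the contraction ``matrix'' must be read as the identity. Conceptually, the argument merely expresses that evaluating the universal representation of $\kk Q$ at a point $B\in\Rep(A,v)$ factors through $A$, since the defining equations of $\Rep(A,v)$ force the matrices representing the generators $g_k$, and hence those representing every element of $\ker(\beta)$, to vanish.
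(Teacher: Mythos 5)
Your proposal is correct and follows essentially the same route as the paper: both reduce well-definedness to showing that $\xpij{h}{ij}$ lies in the defining ideal $I$ for $h\in e_{\head}\ker(\beta)e_{\tail}$, write $h$ as a sum of composable products $u_k\,g_{i_k}\,w_k$, and apply Lemma~\ref{lem:product} twice to exhibit each summand as a combination of the generators $\xpij{g_{i_k}}{rs}$ of $I$, with the trace identity following from $\tr_\gamma=\sum_i\xpij{\gamma}{ii}$. Your extra care about trivial-path factors is a minor point the paper leaves implicit.
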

 \begin{proof}
 Let $p,q\in \beta^{-1}(f)$, so $p-q\in \ker(\beta)=\langle g_1, \dots, g_m\rangle$. Write $p-q=\sum_k r_kg_ks_k$ for elements $r_k\in e_{\head}\kk Q e_{\head(g_k)}$ and $s_k\in e_{\tail(g_k)}\kk Qe_{\tail}$ for $1\leq k\leq m$. For any $1\leq i\leq v_{\head}$ and $1\leq j\leq v_{\tail}$, we have
  \[
   \xpij{p}{ij} - \xpij{q}{ij} 
   =
   \xpij{(p-q)}{ij} 
   = \sum_k \xpij{r_k g_k s_k}{ij} = \sum_k \sum_{k_1, k_2}\xpij{r_k}{ik_1}\xpij{g_k}{k_1k_2}\xpij{s_k}{k_2j}
   \]
 in $\kk[\Rep(Q^*,v)]$, where the third equality follows from Lemma~\ref{lem:product}.  The right-hand expression lies in the ideal \eqref{eqn:idealI} defining the scheme $\Rep(A,v)$, so the function $\tau(\xpij{p}{ij})=\xpij{p}{ij}\vert_{\Rep(A,v)}$ coincides with the function $\tau(\xpij{q}{ij})=\xpij{q}{ij}\vert_{\Rep(A,v)}$ in $\kk[\Rep(A,v)]$ as required. The second statement follows because $\tau(\xpij{p}{ij})=\xpij{p}{ij}\vert_{\Rep(A,v)}$. The final statement follows, because any cycle $\gamma$ passing through vertex $i\in Q_0$ satisfies $\tr_\gamma = \sum_{1\leq i\leq v_i} \xpij{\gamma}{ii}$. 
  \end{proof}

     Given $\kk[\Rep(A,v)]$ and the surjective $\kk$-algebra homomorphism $\beta\colon \kk Q\to A$, it is now straightforward to give generators of the $G_K$-invariant subalgebra $\kk[\Rep(A,v)]^{G_K}$.
     
 \begin{proposition}
 \label{prop:RepAinvariants}
  For any subset $K\subseteq Q_0$, the $\kk$-algebra $\kk[\Rep(A,v)]^{G_K}$ is generated by:
 \begin{enumerate}
 \item[\aLus] the trace functions $\tr_{\beta(\gamma)}$ associated to cycles $\gamma$ in $Q$ that traverse only arrows in $Q$ with head and tail in $K$; and
 \item[\bLus] the contraction functions $\xpij{\beta(p)}{ij}$ for paths $p$ in $Q$ with head and tail in $K^c$, where the indices satisfy $1\leq i\leq v_{\head(p)}$ and $1\leq j\leq v_{\tail(p)}$.
 \end{enumerate}
 \end{proposition}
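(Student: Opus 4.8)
The plan is to deduce the statement directly from Lusztig's Theorem~\ref{thm:Lusztig} by transporting its generators through the restriction map $\tau$, after first upgrading $\tau$ to a surjection between $G_K$-invariant subalgebras. Since the relations defining $\Rep(A,v)$ are preserved by change of basis, the subscheme $\Rep(A,v)$ is $G_K$-stable inside $\Rep(Q,v)$; hence the ideal $I$ from \eqref{eqn:idealI} is $G_K$-stable and the surjective restriction homomorphism $\tau\colon \kk[\Rep(Q,v)]\to \kk[\Rep(A,v)]$ is $G_K$-equivariant. It therefore restricts to an algebra homomorphism
\[
\tau_K\colon \kk[\Rep(Q,v)]^{G_K}\longrightarrow \kk[\Rep(A,v)]^{G_K}
\]
between invariant subalgebras, and the entire content of the proposition rests on showing that $\tau_K$ is surjective.

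To prove surjectivity, I would first note that $G_K=\prod_{k\in K}\GL(v_k,\kk)$ is linearly reductive, being a product of general linear groups over a field of characteristic zero. Each of $\kk[\Rep(Q,v)]$ and $\kk[\Rep(A,v)]$ then carries a Reynolds operator---a canonical $G_K$-equivariant projection onto its invariant subalgebra---and these are compatible with the equivariant map $\tau$, in the sense that $\tau$ commutes with the Reynolds operators. Given $f\in \kk[\Rep(A,v)]^{G_K}$, I would lift it to some $h\in \kk[\Rep(Q,v)]$ with $\tau(h)=f$, apply the Reynolds operator of the source to obtain an invariant $\overline{h}\in \kk[\Rep(Q,v)]^{G_K}$, and use compatibility together with the invariance of $f$ to conclude $\tau(\overline{h})=f$. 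This shows that $\tau_K$ is surjective.

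With surjectivity established, the proposition follows at once. By Theorem~\ref{thm:Lusztig}, the algebra $\kk[\Rep(Q,v)]^{G_K}$ is generated by the trace functions $\tr_\gamma$ for cycles $\gamma$ all of whose arrows have head and tail in $K$, together with the contraction functions $\pij$ for paths $p$ with head and tail in $K^c$. Because $\tau_K$ is a surjective algebra homomorphism, the images of these generators generate the target $\kk[\Rep(A,v)]^{G_K}$, and Lemma~\ref{lem:restrictfunctions} identifies those images as $\tau(\tr_\gamma)=\tr_{\beta(\gamma)}$ and $\tau(\pij)=\xpij{\beta(p)}{ij}$. These are precisely the functions listed in \aLus{} and \bLus{}.

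The single genuine obstacle is the surjectivity of $\tau_K$; once this is in place, everything else is bookkeeping carried across the surjection. I do not expect any difficulty beyond invoking the standard invariant-theoretic fact that, for a linearly reductive group in characteristic zero, taking invariants sends surjections to surjections---precisely the point at which the reductivity of $G_K$ and the characteristic-zero hypothesis enter.
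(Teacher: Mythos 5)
Your proposal is correct and follows essentially the same route as the paper: both deduce surjectivity of $\tau_K$ from the linear reductivity of $G_K$ in characteristic zero (the paper cites this fact directly, while you unwind it via the Reynolds operator), and then push Lusztig's generators from Theorem~\ref{thm:Lusztig} across $\tau_K$ using Lemma~\ref{lem:restrictfunctions}.
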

 \begin{proof}
    Since $\GL(v)$ is reductive and $\kk$ has characteristic zero, the restriction of the surjective map $\tau$ to the $G_K$-invariant subalgebras is a surjective $\kk$-algebra homomorphism
     \begin{equation}
     \label{eqn:qK}
    \tau_K\colon \kk[\Rep(Q,v)]^{G_K}\longrightarrow \kk[\Rep(A,v)]^{G_K}.
     \end{equation}
     Trace and contraction functions generate $\kk[\Rep(Q,v)]^{G_K}$ by Theorem~\ref{thm:Lusztig}, so the given trace and contraction functions of classes in $A$ generate $\kk[\Rep(A,v)]^{G_K}$ by Lemma~\ref{lem:restrictfunctions} as required.
 \end{proof}

\begin{remark}
\label{rem:finitelymany}
 Only finitely many of the generators listed in Proposition~\ref{prop:RepAinvariants} are required because the algebra $\kk[\Rep(A,v)]$ is finitely generated over $\kk$ and $G_K$ is reductive.
\end{remark}

Before working towards an explicit description of the kernel of the map from \eqref{eqn:qK}, we introduce a general construction. For any arrow $a\in Q_1$, consider the ideal 
\[
I_a:=\big\langle \xaij{a}{ij} \mid 1\leq i\leq v_{\head(a)} \text{ and }1\leq j\leq v_{\tail(a)}\big\rangle
\]
in the polynomial ring $\kk[\Rep(Q,v)]$.

\begin{lemma}
\label{lem:traversesa}
\begin{enumerate}
   \item[\one] A nontrivial path $p$ in $Q$ traverses arrow $a$ if and only if $\xpij{p}{ij}\in I_a$ for all $1\leq i\leq v_{\head(a)} \text{ and }1\leq j\leq v_{\tail(a)}$.
    \item[\two] A nontrivial cycle $\gamma$ traverses arrow $a$ if and only if $\tr_\gamma\in I_a$.
\end{enumerate}
\end{lemma}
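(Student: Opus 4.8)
The plan is to exploit that $I_a$ is generated by the subset $\{\xaij{a}{ij}\}$ of the polynomial variables appearing in \eqref{eqn:kRep}. Thus $I_a$ is a monomial ideal, and the quotient $\kk[\Rep(Q,v)]/I_a$ is the polynomial ring on the remaining variables $\{\xaij{b}{ij}\mid b\in Q_1\setminus\{a\}\}$, with quotient map given by setting every $a$-variable to zero. Consequently, a polynomial $f$ lies in $I_a$ if and only if each of its monomials is divisible by some $\xaij{a}{ij}$. I will test both equivalences against this single criterion.

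For \one, write $p=a_\ell\cdots a_1$ and expand $\xpij{p}{ij}$ using Lemma~\ref{lem:product}: every monomial is a product $\xaij{a_\ell}{ik_{\ell-1}}\cdots \xaij{a_1}{k_1j}$ carrying exactly one variable for each arrow traversed by $p$, counted with multiplicity. If $p$ traverses $a$, then every such monomial is divisible by an $a$-variable, so $\xpij{p}{ij}\in I_a$ for all admissible $i,j$. Conversely, if $p$ does not traverse $a$, then no monomial of $\xpij{p}{ij}$ involves an $a$-variable, so by the criterion above $\xpij{p}{ij}\in I_a$ can only happen if $\xpij{p}{ij}=0$; ruling this out is the one substantive point, addressed below.

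Part \two follows the same pattern via the identity $\tr_\gamma=\sum_{1\leq i\leq v_{\head(\gamma)}}\xpij{\gamma}{ii}$ recorded in the proof of Lemma~\ref{lem:restrictfunctions}. If $\gamma$ traverses $a$, then each diagonal contraction $\xpij{\gamma}{ii}$ lies in $I_a$ by \one, hence so does $\tr_\gamma$. If $\gamma$ avoids $a$, then $\tr_\gamma$ contains no $a$-variable, so again $\tr_\gamma\in I_a$ forces $\tr_\gamma=0$.

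The main obstacle, common to both converses, is non-vanishing: membership in the monomial ideal is immediate, but non-membership requires that the contraction (resp.\ trace) function not be the zero polynomial. I would settle this by noting that each monomial occurs with coefficient equal to the number of index sequences producing it, a positive integer; since $\operatorname{char}\kk=0$ no cancellation occurs, so the function is nonzero as soon as a single index sequence exists, i.e.\ whenever every vertex traversed by $p$ (resp.\ $\gamma$) carries positive dimension. This is the relevant situation, and it completes both equivalences.
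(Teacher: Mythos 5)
Your proof is correct and follows essentially the same route as the paper: expand $\xpij{p}{ij}$ via Lemma~\ref{lem:product} and use that $I_a$ is generated by the variables $\xaij{a}{kl}$, so that membership in $I_a$ is detected monomial by monomial. The only difference is presentational: where the paper says ``compare nonzero terms,'' you spell out the non-vanishing of the contraction and trace functions (positive integer coefficients, positive vertex dimensions), which is a welcome clarification of the same step rather than a new idea.
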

\begin{proof}
   For \one, write $p=a_\ell\cdots a_1$ for $a_1, \dots, a_\ell\in Q_1$. One direction is immediate from Lemma~\ref{lem:product}. For the converse, suppose $\xpij{p}{ij}\in I_a$ for all $1\leq i\leq v_{\head(a)} \text{ and }1\leq j\leq v_{\tail(a)}$. Lemma~\ref{lem:product} gives
   \[
    \sum_{k_1, \dots, k_{\ell-1}} \xpij{a_{\ell}}{ik_{\ell-1}}\xpij{a_{\ell-1}}{k_{\ell-1}k_{\ell-2}}\cdots \xpij{a_{1}}{k_{1}j} = \xpij{p}{ij} = \sum_{k,l} h_{k,l}\xaij{a}{kl}
   \]
   for some $h_{k,l}\in \kk[\Rep(Q,v)]$. Compare nonzero terms to see that $a=a_k$ for some $1\leq k\leq \ell$. Part \two\ is similar: in each direction, combine Lemma~\ref{lem:product} with the fact that $\tr_\gamma =\sum_{1\leq i\leq v_{\tail(\gamma)}} \xpij{\gamma}{ii}$. 
\end{proof}

\begin{proposition}
\label{prop:IainHKinvariants}
Let $K\subseteq Q_0$. The ideal $I_{a}\cap \kk[\Rep(Q,v)]^{G_K}$ in $\kk[\Rep(Q,v)]^{G_K}$ is generated by:
\begin{enumerate}
    \item[\aLus] the trace functions $\tr_\gamma$ associated to cycles $\gamma$ in $Q$ that traverse only arrows with head and tail in $K$, including arrow $a$; and
    \item[\bLus] the contraction functions $\xpij{p}{ij}$ associated to paths $p$ in $Q$ that traverse arrows including arrow $a$, where both the head and tail of $p$ lie in $K^c$, and where $1\leq i\leq v_{\head(p)}$ and $1\leq j\leq v_{\tail(p)}$. 
\end{enumerate}
\end{proposition}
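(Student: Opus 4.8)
The plan is to put a grading on $\kk[\Rep(Q,v)]$ by the degree in the variables attached to the single arrow $a$, and to check that the ideal $I_a$, the group $G_K$, and the generators supplied by Theorem~\ref{thm:Lusztig} all interact cleanly with this grading. Using the description \eqref{eqn:kRep}, I would write $\kk[\Rep(Q,v)]=\bigoplus_{d\geq 0}S_d$, where $S_d$ is spanned by the monomials of total degree $d$ in the variables $\xaij{a}{ij}$ belonging to $a$. With this convention $S_0=\kk[\xaij{b}{ij}\mid b\neq a]$, and $I_a=\bigoplus_{d\geq 1}S_d$ is exactly the ideal of elements whose $a$-degree-zero component vanishes.

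The first substantive step is that this grading is compatible with invariants and generators. Since $G_K$ acts on the block $B_a$ by the linear rule $B_a\mapsto g_{\head(a)}B_ag_{\tail(a)}^{-1}$ (with $g_k=\id$ for $k\in K^c$), it preserves $a$-degree; as the homogeneous components of an invariant function are again invariant, $R:=\kk[\Rep(Q,v)]^{G_K}$ is a graded subring $R=\bigoplus_d R_d$ with $R_d=R\cap S_d$. Intersecting with $I_a$ then gives $I_a\cap R=\bigoplus_{d\geq 1}R_d$, the part of $R$ of positive $a$-degree. Moreover, Lemma~\ref{lem:product} shows that each $\tr_\gamma$ and each $\xpij{p}{ij}$ is $a$-homogeneous, of $a$-degree equal to the number of times $\gamma$, respectively $p$, traverses $a$; by Lemma~\ref{lem:traversesa} this degree is positive precisely when $\gamma$, respectively $p$, traverses $a$.

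The conclusion now follows from a standard graded argument. By Theorem~\ref{thm:Lusztig}, $R$ is generated as a $\kk$-algebra by the trace and contraction functions listed there; I would split these into the collection $G^+$ of generators that traverse $a$ and the collection $G^0$ of those that do not. By the degree computation above, $G^+$ is exactly the set of functions in \aLus\ and \bLus\ of the proposition, and every member of $G^0$ has $a$-degree zero. Given $f\in I_a\cap R$, I may assume $f$ is $a$-homogeneous of positive degree, so it is a sum of monomials in the generators, each of positive $a$-degree; since the $G^0$ generators contribute degree zero, every such monomial contains a factor from $G^+$ and hence lies in the ideal of $R$ generated by $G^+$. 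The reverse containment is immediate, since each $G^+$ generator lies in both $I_a$ (by Lemma~\ref{lem:traversesa}) and $R$. Therefore $I_a\cap R$ is generated by $G^+$, which is the asserted list.

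The individual verifications are routine once the grading is fixed; the point I would treat as the crux is the identity $I_a\cap R=\bigoplus_{d\geq 1}R_d$, which hinges on $R$ being a graded subring. This in turn rests on the observation that extracting $a$-homogeneous components commutes with the degree-preserving $G_K$-action, so that invariance can be tested one degree at a time.
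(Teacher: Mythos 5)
Your proof is correct and follows essentially the same route as the paper's: both reduce to Lusztig's generators (Theorem~\ref{thm:Lusztig}) together with the observation (Lemma~\ref{lem:traversesa}) that a generator lies in $I_a$ exactly when its path or cycle traverses $a$. Your explicit $a$-degree grading is a clean formalization of the step the paper phrases more loosely as ``every term of $f$ is divisible by some $\xpij{a}{ij}$'', since passing to homogeneous components rules out cancellation between monomials in the generators.
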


\begin{remark}
 If either $\head(a)$ or $\tail(a)$ does not lie in $K$, then there are no cycles $\gamma$ of type \aLus.    
\end{remark}

\begin{proof}
For a path $p$ in $Q$ traversing $a$ with $\tail(p), \head(p)\not\in K$, and for $1\leq i\leq v_{\head(p)}$ and  $1\leq j\leq v_{\tail(p)}$, the previous lemma gives $\xpij{p}{ij}\in I_{a}$. Since  $\tail(p), \head(p)\not\in K$, the function $\xpij{p}{ij}$ is $G_K$-invariant, so $\xpij{p}{ij}\in I_{a}\cap \kk[\Rep(Q,v)]^{G_K}$. Similarly, for a cycle $\gamma$ traversing $a$ with $\tail(a)\in K$, the previous lemma gives $\tr_\gamma\in I_{a}$. Also,  $\tr_\gamma$ is $G_K$-invariant, so $\tr_\gamma\in I_{a}\cap \kk[\Rep(Q,v)]^{G_K}$. Thus, the ideal in $\kk[\Rep(Q,v)]^{G_K}$ generated by all functions of type \aLus\ and \bLus\ is contained in the ideal $I_{a}\cap \kk[\Rep(Q,v)]^{G_K}$.

For the opposite inclusion, each element of $I_{a}$ is of the form 
\begin{equation}
    \label{eqn:elementinIa}
f=\sum_{1\leq i\leq v_{\head(a)}} \sum_{1\leq j\leq v_{\tail(a)}} h_{i,j}\xpij{a}{ij}
\end{equation}
 for some $h_{i,j}\in \kk[\Rep(Q,v)]$. Proposition~\ref{prop:RepAinvariants} shows that every element of the algebra $\kk[\Rep(Q,v)]^{G_K}$ can be written as a linear combination of monomials of the form  
 \begin{equation}
     \label{eqn:elementinkRepQvHK}
 \prod_{p, \gamma, i, j} \xpij{p}{ij}^{m_{p,ij}} \tr_\gamma^{n_\gamma},
  \end{equation}
 for some $m_{p,ij}, n_\gamma\in \mathbb{N}$, where the sum is taken over the appropriate paths $p$, cycles $\gamma$, and indices $i, j$ as listed in Proposition~\ref{prop:RepAinvariants}. Thus, for $f\in I_a\cap \kk[\Rep(Q,v)]^{G_K}$, every term 
of $f$ is divisible by some $\xpij{a}{ij}$ by \eqref{eqn:elementinIa}, and moreover, is also a scalar multiple of a monomial of the form \eqref{eqn:elementinkRepQvHK}. Thus, in every such term, a path $p$ or cycle $\gamma$ appearing in the sum traverses arrow $a$, and it satisfies the other conditions listed in \aLus\ or \bLus\ by Proposition~\ref{prop:RepAinvariants}. Therefore every nonzero term of $f$ lies in the ideal of $\kk[\Rep(Q,v)]^{G_K}$ generated by the functions of type \aLus\ or \bLus\  as required. 
\end{proof}

\section{Relations between Lusztig's generators}
\label{sec:relations}
Consider again a quiver algebra $A$ with surjective $\kk$-algebra homomorphism $\beta\colon 
\kk Q\to A$ satisfying $\ker(\beta)=\langle g_1, \dots, g_m\rangle$, where for $1\leq k\leq m$, there exist vertices  $\tail_k, \head_k\in Q_0$ such that $g_k\in e_{\head_k}\kk Qe_{\tail_k}$. Each relation $g_k$ is of the form \eqref{eqn:relation} and hence $\xpij{g_k}{ij}\in \kk[\Rep(Q,v)]$ for $1\leq i\leq v_{\head_k}$ and $1\leq j\leq v_{\tail_k}$. 

 Augment the quiver $Q$ by introducing additional arrows $a_1, \dots, a_m$, where $\tail(a_k)=\tail_k$ and $\head(a_k)=\head_k$ for each $1\leq k\leq m$. The resulting quiver $\overline{Q}$ has the same vertex set $Q_0$ as $Q$, but the arrow set is $\overline{Q}_1:= Q_1\cup \{a_1, \dots, a_m\}$. Notice that 
\begin{equation}
    \label{eqn:kRepQ'v}
\kk[\Rep(\overline{Q},v)]\cong \kk[\xpij{a_k}{ij} \mid  1\leq k\leq m, 1\leq  i\leq v_{\head_k}, 1\leq j\leq v_{\tail_k}] \otimes_\kk \kk[\Rep(Q,v)],
\end{equation}
 and moreover, the action of $G_K$ on $\kk[\Rep(\overline{Q},v)]$ is compatible with the $G_K$-action on $\kk[\Rep(Q,v)]$. Consider the ideals $I_{a_1}, \dots, I_{a_m}$ associated to the arrows $a_1, \dots, a_m$, and write 
\[
\overline{I}:= I_{a_1}+\cdots + I_{a_m} = \big\langle \xaij{a_k}{ij} \mid 1\leq k\leq m, 1\leq i\leq v_{\head_k} \text{ and }1\leq j\leq v_{\tail_k}\big\rangle
\]
for the ideal sum in $\kk[\Rep(\overline{Q},v)]$. 

\begin{corollary}
\label{cor:sumideal}
Let $K\subseteq Q_0$. The ideal $\overline{I}\cap \kk[\Rep(\overline{Q},v)]^{G_K}$ in $\kk[\Rep(\overline{Q},v)]^{G_K}$ is generated by:
\begin{enumerate}
    \item[\aLus] the functions $\tr_\gamma$ associated to cycles $\gamma$ in $\overline{Q}$ that traverse only arrows with head and tail in $K$, including an arrow $a_k$ for some $1\leq k\leq m$; and
    \item[\bLus] the functions $\xpij{p}{ij}$ associated to paths $p$ in $\overline{Q}$ that traverse arrows including $a_k$ for some $1\leq k\leq m$, where both the head and tail of $p$ lie in $K^c$, where $1\leq i\leq v_{\head(p)}$, and $1\leq j\leq v_{\tail(p)}$. 
\end{enumerate}
\end{corollary}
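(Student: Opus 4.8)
The plan is to deduce the statement directly from Proposition~\ref{prop:IainHKinvariants} by treating one arrow $a_k$ at a time, and then to assemble the single-arrow descriptions using the reductivity of $G_K$. Throughout, write $R:=\kk[\Rep(\overline{Q},v)]$, and recall that $\overline{Q}$ is a finite, connected quiver with arrow set $\overline{Q}_1 = Q_1\cup\{a_1,\dots,a_m\}$ and the same vertex set $Q_0$, so that Proposition~\ref{prop:IainHKinvariants} applies verbatim to $\overline{Q}$ and to each of its arrows $a_k$ for the subset $K\subseteq Q_0$.

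First I would check that each ideal $I_{a_k}$ is $G_K$-stable. The subspace $V_k:=\operatorname{span}_\kk\{\xaij{a_k}{ij}\mid 1\leq i\leq v_{\head_k},\,1\leq j\leq v_{\tail_k}\}$ is carried to itself by $G_K$, since the dual action is linear in the entries of the matrix attached to $a_k$; as $I_{a_k}$ is the ideal generated by the $G_K$-stable subspace $V_k$, it is itself $G_K$-stable, and hence so is $\overline{I}=\sum_k I_{a_k}$. Next, since $G_K$ is reductive and $\operatorname{char}\kk=0$, the Reynolds operator $\mathcal{R}\colon R\to R^{G_K}$ is an $R^{G_K}$-module projection. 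For any $G_K$-stable ideal $J\subseteq R$ one has $\mathcal{R}(J)=J\cap R^{G_K}$: the inclusion $J\cap R^{G_K}\subseteq \mathcal{R}(J)$ holds because $\mathcal{R}$ fixes invariants, while $\mathcal{R}(J)\subseteq J\cap R^{G_K}$ holds because $\mathcal{R}(f)$ is an average of $G_K$-translates of $f$, each of which lies in the $G_K$-stable subspace $J$. Combining this with the $\kk$-linearity of $\mathcal{R}$ yields
\[
\overline{I}\cap R^{G_K}=\mathcal{R}\Big(\sum_{k} I_{a_k}\Big)=\sum_{k}\mathcal{R}(I_{a_k})=\sum_{k}\big(I_{a_k}\cap R^{G_K}\big).
\]

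It then remains to identify the generators. Applying Proposition~\ref{prop:IainHKinvariants} to $\overline{Q}$ and the arrow $a_k$ shows that $I_{a_k}\cap R^{G_K}$ is generated by the functions $\tr_\gamma$ for cycles $\gamma$ in $\overline{Q}$ traversing only arrows with head and tail in $K$ and including $a_k$, together with the functions $\xpij{p}{ij}$ for paths $p$ in $\overline{Q}$ traversing $a_k$ with head and tail in $K^c$. Taking the union over $1\leq k\leq m$ of these generating sets—which is exactly the passage from ``including $a_k$'' to ``including an arrow $a_k$ for some $1\leq k\leq m$''—produces precisely the lists \aLus\ and \bLus\ in the statement, and by the displayed identity these functions generate $\overline{I}\cap R^{G_K}$.

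The only real content beyond Proposition~\ref{prop:IainHKinvariants} is the interchange of intersection-with-invariants and the ideal sum, so the step I expect to require the most care is verifying that each $I_{a_k}$ is $G_K$-stable and that the Reynolds operator identity $\mathcal{R}(J)=J\cap R^{G_K}$ applies, since together these justify the displayed equality that reduces the sum to the arrow-by-arrow case.
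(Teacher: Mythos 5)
Your proposal is correct and follows essentially the same route as the paper: establish $G_K$-stability of each $I_{a_k}$, use the Reynolds operator to show that intersecting with $R^{G_K}$ distributes over the ideal sum (the paper packages this as Lemma~\ref{lem:Reynolds}, which you prove inline), and then apply Proposition~\ref{prop:IainHKinvariants} to each arrow $a_k$ of $\overline{Q}$ and take the union of the resulting generating sets.
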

 \begin{proof}
  Each ideal $I_{a_k}$ in $\kk[\Rep(\overline{Q},v)]$ cuts out an $G_K$-invariant subscheme of $\Rep(\overline{Q},v)$, so $I_{a_k}$ is $G_K$-invariant. Apply the general result from Lemma~\ref{lem:Reynolds} below to see that 
  \[
  \overline{I}\cap \kk[\Rep(\overline{Q},v)]^{G_K} = \sum_{1\leq k\leq m} \Big(I_{a_k}\cap \kk[\Rep(\overline{Q},v)]^{G_K}\Big),
  \]
 so $\overline{I}\cap \kk[\Rep(\overline{Q},v)]^{G_K}$ is generated by the union of generators of the ideals $I_{a_k}\cap \kk[\Rep(\overline{Q},v)]^{G_K}$ for $1\leq k\leq m$. The result follows by applying Proposition~\ref{prop:IainHKinvariants} to each of $a_1, \dots, a_m$.
 \end{proof}

\begin{lemma}\label{lem:Reynolds}
  Let $R$ be a commutative $\kk$-algebra and $G$ a reductive group acting on $R$ as a rational representation. Then, for any collection of $G$-invariant ideals $\{I_i\}_i$ of $R$, we have
    $$\Big(\sum_i I_i\Big)\cap R^G=\sum_i\big(I_i\cap R^G\big).$$
\end{lemma}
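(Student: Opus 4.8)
The plan is to deduce the nontrivial inclusion from the existence of the Reynolds operator attached to $G$. Since $G$ is reductive and $\kk$ has characteristic zero, every rational $G$-module $V$ is completely reducible and splits canonically as $V = V^G \oplus V_+$, where $V_+$ is the sum of the nontrivial isotypic components; I would write $\mathcal{R}_V \colon V \to V^G$ for the projection onto the first factor, the Reynolds operator. The two properties I would use are that $\mathcal{R}_V$ restricts to the identity on $V^G$, and that it is natural in $V$: for a morphism of rational $G$-modules $\iota\colon W \to V$ one has $\mathcal{R}_V \circ \iota = \iota \circ \mathcal{R}_W$.

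First I would dispose of the easy inclusion $\sum_i (I_i \cap R^G) \subseteq (\sum_i I_i)\cap R^G$, which is immediate since each $I_i\cap R^G$ lies in $I_i\subseteq\sum_i I_i$ and in $R^G$, hence so does their sum.

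For the reverse inclusion, I would take $f\in(\sum_i I_i)\cap R^G$ and, by definition of the ideal sum, write it as a finite sum $f=\sum_{i\in F} x_i$ with $x_i\in I_i$. Because each $I_i$ is a $G$-stable ideal, it is a rational $G$-submodule of $R$, so naturality of the Reynolds operator applied to the inclusion $\iota\colon I_i\hookrightarrow R$ shows that $\mathcal{R}_R(x_i)$ equals $\mathcal{R}_{I_i}(x_i)\in (I_i)^G = I_i\cap R^G$. Applying $\mathcal{R}_R$ to $f$, using $\mathcal{R}_R(f)=f$ since $f$ is invariant, together with $\kk$-linearity, then gives
\[
f=\mathcal{R}_R(f)=\sum_{i\in F}\mathcal{R}_R(x_i)\in\sum_i\big(I_i\cap R^G\big),
\]
as required.

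The main obstacle --- and the only step where reductivity is genuinely used --- is establishing $\mathcal{R}_R(I_i)\subseteq I_i$, i.e. that averaging keeps a $G$-stable ideal inside itself. I would justify this through the naturality and complete-reducibility package above rather than by an explicit averaging integral, so that the argument works uniformly for any reductive $G$ over $\kk$. The one technical point to be careful about is that $R$ should be treated as a rational $G$-module (the directed union of its finite-dimensional subrepresentations), which is what makes complete reducibility and the canonical decomposition available, and with respect to which the $G$-stable ideals $I_i$ are genuinely submodules.
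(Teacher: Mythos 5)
Your proof is correct and follows the same route as the paper, which simply invokes the restriction of the Reynolds operator $E\colon R\to R^G$ to each $G$-invariant ideal $I_i$ and cites Bertram's notes for the details you have written out. The key step in both cases is that $E(I_i)\subseteq I_i\cap R^G$ for a $G$-stable ideal, combined with $E(f)=f$ for invariant $f$.
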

\begin{proof}
The result follow by considering the restriction of the Reynolds operator $E\colon R\to R^{G}$ to each ideal $I_i$; see \cite[Corollary~1.4]{BertramGIT} for details.
\end{proof}

 Recall from \eqref{eqn:idealI} that $\kk[\Rep(A,v)]$ is isomorphic to the quotient of $\kk[\Rep(Q,v)]$ by the ideal 
  \[
 I = \big\langle  \xpij{g_k}{ij}\mid 1\leq k\leq m, 1\leq i\leq \head_k, 1\leq j\leq \tail_k \big\rangle,
 \]
 and moreover, restriction to the $G_K$-invariant subalgebra determines the surjective map 
  \[
 \tau_K\colon \kk[\Rep(Q,v)]^{G_K} \longrightarrow \kk[\Rep(A,v)]^{G_K}
 \]
 from \eqref{eqn:qK}.
 
 \begin{theorem}
 \label{thm:I_K}
 Let $K\subseteq Q_0$. The kernel of $\tau_K$ is the ideal in $\kk[\Rep(Q,v)]^{G_K}$ generated by:
 \begin{enumerate}
    \item[\aLus] the functions $\tr_{\gamma}$, where $\gamma\in e_k\ker(\beta) e_k$ for some $k\in K$; and
  \item[\bLus] the functions $\xpij{p}{ij}$, where $p\in e_{\head}\ker(\beta) e_{\tail}$ for $\head, \tail\in K^c$, where $1\leq i\leq v_{\head}$ and $1\leq j\leq v_{\tail}$.
\end{enumerate}
 \end{theorem}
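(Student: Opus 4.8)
The plan is to realise $\ker(\tau_K)$ as the image, under a natural $G_K$-equivariant retraction from the augmented quiver back to $Q$, of the ideal $\overline{I}\cap\kk[\Rep(\overline{Q},v)]^{G_K}$ whose generators were just computed in Corollary~\ref{cor:sumideal}. Concretely, using the tensor decomposition \eqref{eqn:kRepQ'v}, I would define the surjective $\kk$-algebra homomorphism
\[
\phi\colon \kk[\Rep(\overline{Q},v)]\longrightarrow \kk[\Rep(Q,v)]
\]
that restricts to the identity on the factor $\kk[\Rep(Q,v)]$ and sends each coordinate $\xaij{a_k}{ij}$ to the polynomial $\xpij{g_k}{ij}$. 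This is the comorphism of the closed embedding $\Rep(Q,v)\hookrightarrow\Rep(\overline{Q},v)$ that assigns to the arrow $a_k$ the matrix of the relation $g_k$. Since $a_k$ and $g_k$ share the head $\head_k$ and tail $\tail_k$, the functions $\xaij{a_k}{ij}$ and $\xpij{g_k}{ij}$ transform identically under $G_K$, so $\phi$ is $G_K$-equivariant and restricts to a surjection $\phi^{G_K}$ on invariant subalgebras.

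The first key step is the identity $\ker(\tau_K)=\phi^{G_K}\big(\overline{I}\cap\kk[\Rep(\overline{Q},v)]^{G_K}\big)$. Because $G_K$ is reductive and $\ker(\tau)=I$, restricting to invariants gives $\ker(\tau_K)=I\cap\kk[\Rep(Q,v)]^{G_K}$, while by construction $\phi(\overline{I})=I$. The inclusion of $I\cap\kk[\Rep(Q,v)]^{G_K}$ in the image follows by lifting: given an invariant $f\in I$, choose any preimage $\tilde f\in\overline{I}$ and apply the Reynolds operator $E$, noting $\phi\circ E=E\circ\phi$ by equivariance, so that $E(\tilde f)\in\overline{I}\cap\kk[\Rep(\overline{Q},v)]^{G_K}$ maps to $E(f)=f$. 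The reverse inclusion is immediate from $\phi(\overline{I})=I$. Since $\phi^{G_K}$ is a surjection of rings, it carries the generators of $\overline{I}\cap\kk[\Rep(\overline{Q},v)]^{G_K}$ supplied by Corollary~\ref{cor:sumideal} to a generating set of $\ker(\tau_K)$ as an ideal of $\kk[\Rep(Q,v)]^{G_K}$.

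It then remains to identify these images with functions of type \aLus\ and \bLus. Writing $\psi\colon\kk\overline{Q}\to\kk Q$ for the algebra homomorphism fixing $Q$ and sending $a_k\mapsto g_k$, Lemma~\ref{lem:product} gives $\phi(\tr_\gamma)=\tr_{\psi(\gamma)}$ and $\phi(\xpij{p}{ij})=\xpij{\psi(p)}{ij}$. A generating cycle $\gamma$ from Corollary~\ref{cor:sumideal}\aLus\ is based at some $k\in K$ and traverses an arrow $a_k$, so $\psi(\gamma)$ contains a factor $g_k\in\ker(\beta)$ and hence lies in $e_k\ker(\beta)e_k$, which is precisely a function of type \aLus\ in the statement; the same argument applied to a generating path $p$ from Corollary~\ref{cor:sumideal}\bLus\ gives $\psi(p)\in e_{\head}\ker(\beta)e_{\tail}$ with $\head,\tail\in K^c$, a function of type \bLus. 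This shows $\ker(\tau_K)$ lies in the ideal generated by the functions \aLus\ and \bLus. For the opposite containment, Lemma~\ref{lem:restrictfunctions} shows each such function maps under $\tau$ to $\tr_{\beta(\gamma)}=0$ or $\xpij{\beta(p)}{ij}=0$ and is $G_K$-invariant, hence lies in $\ker(\tau_K)$.

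The step I expect to be the main obstacle is the surjectivity of $\phi^{G_K}$ onto $\ker(\tau_K)$ at the level of invariants: the naive image $\phi\big(\overline{I}\cap\kk[\Rep(\overline{Q},v)]^{G_K}\big)$ is not visibly all of $I\cap\kk[\Rep(Q,v)]^{G_K}$, and it is precisely the compatibility $\phi\circ E=E\circ\phi$ of the Reynolds operator with the equivariant retraction that closes this gap. Once that is in place, the rest reduces to the bookkeeping of Corollary~\ref{cor:sumideal} together with the multiplicativity recorded in Lemma~\ref{lem:product}.
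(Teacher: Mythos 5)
Your proposal is correct and follows essentially the same route as the paper: the same augmented quiver $\overline{Q}$, the same retraction $\kk[\Rep(\overline{Q},v)]\to\kk[\Rep(Q,v)]$ sending $\xaij{a_k}{ij}\mapsto\xpij{g_k}{ij}$, and the same reduction to Corollary~\ref{cor:sumideal}. The only (cosmetic) difference is that you establish $\ker(\tau_K)=\phi^{G_K}\big(\overline{I}\cap\kk[\Rep(\overline{Q},v)]^{G_K}\big)$ by lifting an invariant element of $I$ and averaging with the Reynolds operator, whereas the paper contracts $I$ along $f'$ to $\overline{I}+\ker(f')$ and applies Lemma~\ref{lem:Reynolds} to that sum before pushing forward --- the same reductivity input packaged via preimages rather than lifts.
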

 \begin{proof}
Define a surjective map of polynomial rings $f'\colon \kk[\Rep(\overline{Q},v)]\to \kk[\Rep(Q,v)]$ by setting
 \[
 f'(\xpij{a}{ij}) = \left\{\begin{array}{lr}
 \xpij{a}{ij} &  \text{ if }a\in Q_1; \\
 \xpij{g_k}{ij} & \quad \text{if }a=a_k\text{ for some }1\leq k\leq m, \end{array}\right.
 \]
 where $a\in Q_1'$, $1\leq i\leq v_{\head(a)}$ and $1\leq j\leq v_{\tail(a)}$. Observe that $f'$ is $G_K$-equivariant. The description of $\kk[\Rep(\overline{Q},v)]$ from \eqref{eqn:kRepQ'v} gives
 \[
 \ker(f') = \big\langle \xpij{a_k}{ij}-\xpij{g_k}{ij} \mid 1\leq k\leq m, 1\leq  i\leq v_{\head_k}\text{ and }1\leq j\leq v_{\tail_k}\big\rangle. 
 \]
 The map $f'$ and the map $\tau$ from the proof of Proposition~\ref{prop:RepAinvariants} fit into a commutative diagram 
     \begin{equation}
 \label{eqn:diagram3.6}
\begin{tikzcd}
  \kk[\Rep(\overline{Q},v)]\ar[r,"f'"] & \kk[\Rep(Q,v)] \ar[r,"\tau"] & \kk[\Rep(A,v)] \\
\kk[\Rep(\overline{Q},v)]^{G_K}\ar[r,"f'_K"]\ar[u,"\iota'"]&\kk[\Rep(Q,v)]^{G_K}\ar[r,"\tau_K"]\ar[u,"\iota"] & \kk[\Rep(A,v)]^{G_K}\ar[u]
  \end{tikzcd}
  \end{equation}
 where the vertical maps are the natural inclusions and the horizontal maps are all surjective; note that $f'_K$ and $\tau_K$ are obtained from $f'$ and $\tau$ respectively by restriction.  Notice that 
  \[
 \ker(\tau)=I = \big\langle \xpij{g_k}{ij}\mid 1\leq k\leq m, 1\leq i\leq \head_k, 1\leq j\leq \tail_k\big\rangle.
 \]
  The ideal of interest is $\ker(\tau_K)=\iota^{-1}(I) = I\cap \kk[\Rep(Q,v)]^{G_K}$. If we contract $I$ along $f'$ we obtain the ideal $(f')^{-1}(I) = \overline{I}+\ker(f')$. Commutativity of the diagram implies that 
 \begin{eqnarray}
 (f'_K)^{-1}(\ker(\tau_K)) & = &(\iota\circ f_K')^{-1}(I) \nonumber \\
 & = &  (f'\circ \iota')^{-1}(I) \nonumber \\
 & = & (\iota')^{-1}\big(\overline{I}+\ker(f')\big) \nonumber \\
 & = &  (\overline{I}+\ker(f'))\cap \kk[\Rep(\overline{Q},v)]^{G_K} \nonumber \\
 & = &  \big(\overline{I}\cap \kk[\Rep(\overline{Q},v)]^{G_K}\big)+\big(\ker(f')\cap \kk[\Rep(\overline{Q},v)]^{G_K}\big), \label{eqn:sumideal}
 \end{eqnarray}
 where the last equality holds by Lemma \ref{lem:Reynolds}; here, $\ker(f')$ is $G_K$-invariant as $f'$ is $G_K$-equivariant.
 
 Since $f_K'$ is surjective, the extension along $f'_K$ of the ideal $ (f'_K)^{-1}(\ker(\tau_K))$  equals $\ker(\tau_K)$, so $\ker(\tau_K)$ is generated by the images under $f'_K$ of the generators of the ideal in \eqref{eqn:sumideal}; this coincides with the images of the generators of $\overline{I}\cap \kk[\Rep(\overline{Q},v)]^{G_K}$. Those generators are given in Corollary~\ref{cor:sumideal}, so we need only apply $f'$ to those generators to obtain generators of $\ker(\tau_K)$. If the cycle $\gamma$ or the path $p$ appearing in one of the generators from Corollary~\ref{cor:sumideal} traverses precisely one arrow $a_k$ for some $1\leq k\leq m$, then the image under $f'$ of this generator is of the form $\tr_{q g_k q'}$ or $\xpij{pg_kp'}{ij}$ as required. More generally, a path or cycle might traverse more than one such arrow, in which case its image under $f'$ is some linear combination of functions of the form $\tr_{q g_k q'}$ or $\xpij{pg_kp'}{ij}$. Therefore, the given trace and contraction functions provide generators for the ideal $\ker(\tau_K)$. 
 \end{proof}

 \begin{example}
    \label{sec:example}
 Let $\Gamma\subset \SL(2,\mathbb{C})$ be the cyclic group of order two whose nontrivial element acts as minus the identity. The McKay quiver of $\Gamma$ is the quiver $Q$ with vertex set $\{0,1\}$ and arrow set $\{c, d, e, f\}$, where the arrows are $c, d\colon 0\to 1$ and $e, f\colon 1\to 0$.  The relations $g_1= fc-ed$ and $g_2 = de-cf$ define the preprojective algebra $\Pi:= \kk Q/\langle g_1, g_2\rangle$ of type $A_1$. 
 
 For the dimension vector $v=(2,2)$, we compute the appropriate matrix products to obtain all eight polynomials $\xpij{g_k}{ij}$ for $1\leq i,j,k\leq 2$ as in \eqref{eqn:variablerelation} that generate the ideal   
  \[
  \ker(\tau)=\left(\begin{array}{c} 
  \scriptstyle{\xpij{c}{11}  \xpij{f}{11}+ \xpij{c}{21}  \xpij{f}{12} - \xpij{d}{11}  \xpij{e}{11}- \xpij{d}{21}  \xpij{e}{12},\;\; 
  \xpij{c}{12}  \xpij{f}{11}+ \xpij{c}{22}  \xpij{f}{12}  - \xpij{d}{12}  \xpij{e}{11}- \xpij{d}{22}  \xpij{e}{12}} \\
  \scriptstyle{\xpij{c}{11}  \xpij{f}{21}+ \xpij{c}{21}  \xpij{f}{22} - \xpij{d}{11}  \xpij{e}{21}- \xpij{d}{21}  \xpij{e}{22},\;\; 
  \xpij{c}{12}  \xpij{f}{21}+ \xpij{c}{22}  \xpij{f}{22} - \xpij{d}{12}  \xpij{e}{21}- \xpij{d}{22}  \xpij{e}{22}} \\
   \scriptstyle{  \xpij{d}{11}  \xpij{e}{11}+ \xpij{d}{12}  \xpij{e}{21}- \xpij{c}{11}  \xpij{f}{11}- \xpij{c}{12}  \xpij{f}{21},\;\;
     \xpij{d}{11}  \xpij{e}{12}+ \xpij{d}{12}  \xpij{e}{22}- \xpij{c}{11}  \xpij{f}{12}- \xpij{c}{12}  \xpij{f}{22}} \\
    \scriptstyle{ \xpij{d}{21}  \xpij{e}{11}+ \xpij{d}{22}  \xpij{e}{21}- \xpij{c}{21}  \xpij{f}{11}- \xpij{c}{22}  \xpij{f}{21},\;\;
     \xpij{d}{21}  \xpij{e}{12}+ \xpij{d}{22}  \xpij{e}{22}- \xpij{c}{21}  \xpij{f}{12}- \xpij{c}{22}  \xpij{f}{22}}
    \end{array}\right)
    \]
   satisfying 
   \[
   \kk[\Rep(\Pi,v)]\cong \kk[\xpij{c}{ij}, \xpij{d}{ij}, \xpij{e}{ij}, \xpij{f}{ij} \mid 1\leq i,j\leq 2]/\ker(\tau).
   \]   
Set $K=\{1\}$.  Each nontrivial path $p$ in $Q$ with head and tail in $K^c=\{0\}$ is the concatenation of finitely many copies of each of the four cycles $ec, fc, fd, ed$ in $Q$ with tail at 0, taken in some order. The relation $g_1$ allows us to ignore the last of these cycles when working in $\Pi$. As a result, the $2\times 2$ matrices  $B_{ec}, B_{fc}, B_{fd}$ associated to the first three cycles determine the twelve generators 
    \[
    \left\{\begin{array}{cccc} 
   \!\!  \xpij{c}{11}  \xpij{e}{11}+ \xpij{c}{21}  \xpij{e}{12},\!\! & 
     \xpij{c}{12}  \xpij{e}{11}+ \xpij{c}{22}  \xpij{e}{12},\!\! &
     \xpij{c}{11}  \xpij{e}{21}+ \xpij{c}{21}  \xpij{e}{22},\!\! &
     \xpij{c}{12}  \xpij{e}{21}+ \xpij{c}{22}  \xpij{e}{22}\!\! \\
    \!\! \xpij{c}{11}  \xpij{f}{11}+ \xpij{c}{21}  \xpij{f}{12},\!\! &
     \xpij{c}{12}  \xpij{f}{11}+ \xpij{c}{22}  \xpij{f}{12},\!\! & 
     \xpij{c}{11}  \xpij{f}{21}+ \xpij{c}{21}  \xpij{f}{22},\!\! & 
     \xpij{c}{12}  \xpij{f}{21}+ \xpij{c}{22}  \xpij{f}{22}\!\! \\
    \!\! \xpij{d}{11}  \xpij{f}{11}+ \xpij{d}{21}  \xpij{f}{12},\!\! & 
     \xpij{d}{12}  \xpij{f}{11}+ \xpij{d}{22}  \xpij{f}{12},\!\! & 
     \xpij{d}{11}  \xpij{f}{21}+ \xpij{d}{21}  \xpij{f}{22},\!\! & 
     \xpij{d}{12}  \xpij{f}{21}+ \xpij{d}{22}  \xpij{f}{22}\!\!
    \end{array}\right\}\]
     of $\kk[\Rep(\Pi,v)]^{G_K}$. Proposition~\ref{prop:RepAinvariants} states that one should also include the trace functions of cycles based at $1\in K$, but these are redundant because trace is cyclic, e.g.\ $\tr_{ce} = \tr_{ec}=\sum_{1\leq i\leq 2} \xpij{ec}{ii}$.
     
          To compute $\ker(\tau_K)= \ker(\tau)\cap  \kk[\Rep(Q,v)]^{G_K}$, introduce variables $x_{ij}, y_{ij}, z_{ij}$ for $1\leq i,j\leq 2$, one for each generator of $\kk[\Rep(\Pi,v)]^{G_K}$ listed above. We use Macaulay2~\cite{M2} to eliminate the sixteen variables $\xpij{c}{ij}, \xpij{d}{ij}, \xpij{e}{ij}, \xpij{f}{ij}$ from the ideal 
    \begin{equation}
        \label{eqn:elimination}
    \left(\begin{array}{cc} 
    x_{11}- (\xpij{c}{11}  \xpij{e}{11}+ \xpij{c}{21}  \xpij{e}{12}), \;\;
    x_{12}- (\xpij{c}{12}  \xpij{e}{11}+ \xpij{c}{22}  \xpij{e}{12}) \\
    x_{21}- (\xpij{c}{11}  \xpij{e}{21}+ \xpij{c}{21}  \xpij{e}{22}), \;\;
    x_{22}- (\xpij{c}{12}  \xpij{e}{21}+ \xpij{c}{22}  \xpij{e}{22}) \\
    y_{11}- (\xpij{c}{11}  \xpij{f}{11}+ \xpij{c}{21}  \xpij{f}{12}), \;\;
    y_{12}- (\xpij{c}{12}  \xpij{f}{11}+ \xpij{c}{22}  \xpij{f}{12}) \\
    y_{21}- (\xpij{c}{11}  \xpij{f}{21}+ \xpij{c}{21}  \xpij{f}{22}), \;\;
    y_{22}- (\xpij{c}{12}  \xpij{f}{21}+ \xpij{c}{22}  \xpij{f}{22}) \\
    z_{11}- (\xpij{d}{11}  \xpij{f}{11}+ \xpij{d}{21}  \xpij{f}{12}), \;\;
    z_{12}- (\xpij{d}{12}  \xpij{f}{11}+ \xpij{d}{22}  \xpij{f}{12}) \\
    z_{21}- (\xpij{d}{11}  \xpij{f}{21}+ \xpij{d}{21}  \xpij{f}{22}), \;\;
    z_{22}- (\xpij{d}{12}  \xpij{f}{21}+ \xpij{d}{22}  \xpij{f}{22})
    \end{array}\right).
    \end{equation}
The resulting elimination ideal in the polynomial ring 
$\kk[x_{ij}, y_{ij}, z_{ij}\mid 1\leq i,j\leq 2]$ is equal to 
\begin{equation}
\label{eqn:eliminationideal}
\left(\begin{array}{c} 
 x_{12}  y_{21} - x_{21}  y_{12},\;\; 
 x_{12}  z_{21} - x_{21}  z_{12},\;\;
   y_{12}  z_{21} - y_{21}  z_{12}, \\
 x_{11}  z_{11}+ x_{21}  z_{12}- y_{11}  y_{11}- y_{12}  y_{21},\;\; x_{12}  z_{11}+ x_{22}  z_{12}- y_{11}  y_{12}- y_{12}  y_{22} \\
 x_{11}  z_{21}+ x_{21}  z_{22} - y_{11}  y_{21}- y_{21}  y_{22}, \;\; x_{12}  z_{21}+ x_{22}  z_{22}- y_{12}  y_{21}- y_{22}  y_{22} \\
 x_{12}  y_{22}- x_{12}  y_{11}+ x_{11}  y_{12}- x_{22}  y_{12}, \;\;  x_{21}  y_{11}- x_{11}  y_{21}+ x_{22}  y_{21}- x_{21}  y_{22}\\
 x_{12}  z_{22} - x_{12}  z_{11}+ x_{11}  z_{12}- x_{22}  z_{12}, \;\; x_{21}  z_{11}- x_{11}  z_{21}+ x_{22}  z_{21}- x_{21}  z_{22}\\
 y_{12}  z_{11}- y_{11}  z_{12}+ y_{22}  z_{12}- y_{12}  z_{22}, \;\;  y_{21}  z_{22} - y_{21}  z_{11}+ y_{11}  z_{21}- y_{22}  z_{21}
      \end{array}\right)
    \end{equation}
One can check~\cite{M2} that this ideal is prime.
 
 Substitute the variables $\xpij{c}{ij}, \xpij{d}{ij}, \xpij{e}{ij}, \xpij{f}{ij}$ back in using \eqref{eqn:elimination} to rewrite this ideal via generators appearing in Theorem~\ref{thm:I_K}, namely 
\begin{equation}
    \label{eqn:kertauK}
\ker(\tau_K)=\left(\begin{array}{c}
\xpij{cg_2b}{ij},\;\;
\xpij{g_1cb}{ij}+\xpij{dag_1}{ij}- \xpij{cg_2b}{ij}-\xpij{dg_2a}{ij} \\ 
\xpij{g_1db}{ij}-\xpij{dg_2b}{ij}, \;\;
\xpij{cag_1}{ij}-\xpij{cg_2a}{ij}
\end{array}
\mid 1\leq i,j,\leq 2
\right);
\end{equation}
here, we present sixteen generators for $\ker(\tau_K)$, but some of these generators agree for $i=j$, leaving the thirteen generators from \eqref{eqn:eliminationideal}. Thus, the invariant subalgebra $\kk[\Rep(\Pi,v)]^{G_K}$ is isomorphic to the quotient of the polynomial ring $\kk[x_{ij}, y_{ij}, z_{ij}\mid 1\leq i,j\leq 2]$ by the ideal \eqref{eqn:eliminationideal}, or equivalently, the quotient of $\kk[\Rep(Q,v)]^{G_K}$ by the ideal $\ker(\tau_K)$ from \eqref{eqn:kertauK}. 
 \end{example}

\begin{remark}
    For a geometric interpretation of Example~\ref{sec:example}, the affine quotient of $\Rep(\Pi,v)\git \GL(v)$ is isomorphic to $\Sym^2(\mathbb{A}^2/\Gamma)$, while for $K=\{1\}$, the affine quotient $X:=\Rep(\Pi,v)\git G_K$ admits a residual action by $\GL(2)$. In this case, we show in \cite{CY25} that for the character $1\in \GL(2)^\vee$, the GIT quotient $X\git_1 \GL(2)$ is isomorphic to $\Hilb^2(\mathbb{A}^2/\Gamma)$, the Hilbert scheme of $2$-points on the Kleinian singularity. In particular, since the ideal \eqref{eqn:eliminationideal} is prime, the scheme $X=\Spec \kk[\Rep(\Pi,v)]^{G_K}$ is reduced and irreducible and hence $X\git_1 \GL(2)\cong \Hilb^2(\mathbb{A}^2/\Gamma)$ is also reduced and irreducible. 
    \end{remark}

 \small{
 
}

   \end{document}